\documentclass[11pt,letterpaper]{amsart}
\usepackage{hyperref}
\usepackage{stmaryrd}
\usepackage{amsmath, amssymb, amsthm}
\usepackage[cmtip,arrow]{xy} 
\usepackage{enumerate}
\usepackage{pb-diagram,pb-xy} 
\usepackage{mathrsfs}

\newcommand{\Q}{\mathbb{Q}}

\newcommand{\Z}{\mathbb{Z}}
\newcommand{\C}{\mathbb{C}}

\newcommand{\A}{\mathbb{A}}
\renewcommand{\P}{\mathbb{P}}

\usepackage{fullpage}

\theoremstyle{plain}
\newtheorem{theorem}{Theorem}[section]
\newtheorem*{introtheorem}{Theorem}

\newtheorem{proposition}[theorem]{Proposition}

\newtheorem{corollary}[theorem]{Corollary}

\theoremstyle{remark}
\newtheorem{remark}[theorem]{Remark}

\theoremstyle{definition}
\newtheorem{definition}[theorem]{Definition}

\newtheorem{example}[theorem]{Example}

\usepackage{graphicx}
\usepackage[cmtip,arrow]{xy} 
\usepackage{enumerate}
\usepackage{verbatim}

\theoremstyle{definition}

\newtheorem*{Local Vanishing}{\mdseries{\emph{Local Vanishing}}}
\newtheorem*{Skoda's Theorem}{\itshape{Skoda's Theorem}}

\newcommand{\J}{\mathcal{J}}
\renewcommand{\a}{\mathfrak{a}}
\newcommand{\m}{\mathfrak{m}}
\renewcommand{\O}{\mathcal{O}}

\newcommand{\K}{K_\pi}
\DeclareMathOperator{\ord}{ord}

\DeclareMathOperator{\spec}{Spec}
\DeclareMathOperator{\Frac}{Frac}

\linespread{1.3}

\begin{document}
\relax
\bibliographystyle{amsalpha}

\title[Jumping Numbers on Algebraic Surfaces]{Jumping Numbers on Algebraic Surfaces with Rational Singularities}
\author{Kevin Tucker}
\address{Department of Mathematics, University of Michigan, Ann Arbor, Michigan  48109}
\email{kevtuck@umich.edu}
\thanks{The author was partially supported by the NSF under grant DMS-0502170.}
\dedicatory{In memory of Juha Heinonen}

\begin{abstract}
In this article, we study the jumping numbers of an ideal in the local ring at rational singularity on a complex algebraic surface.  By understanding the contributions of reduced divisors on a fixed resolution, we are able to present an algorithm for finding of the jumping numbers of the ideal.  This shows, in particular, how to compute the jumping numbers of a plane curve from the numerical data of its minimal resolution.  In addition, the jumping numbers of the maximal ideal at the singular point in a Du Val or toric surface singularity are computed, and applications to the smooth case are explored.
\end{abstract}

\maketitle
\tableofcontents

\section{Introduction}

To every sheaf of ideals $\a$ on a complex algebraic variety $X$ with mild singularities, one can associate its  multiplier ideals $\J(X,\a^\lambda)$.  Indexed by positive rational numbers $\lambda$,
this family forms a nested sequence of ideals.  The
 values of $\lambda$ where the multiplier ideals change are known as jumping numbers.  These
discrete local invariants were studied systematically in \cite{Ein}, after appearing indirectly in \cite{Lib}, \cite{Loes}, \cite{Loes2}, and \cite{Loes3}.  The jumping numbers encode algebraic information about the ideal, and geometric properties of the associated closed subscheme.  Smaller jumping numbers can be thought to correspond to ``worse'' singularities.

\smallskip

In order to find the jumping numbers of an ideal, one must first undertake the difficult task of  resolving singularities. However, even when a resolution is readily available, calculating jumping numbers can be problematic.   Outside the work of Howald on monomial ideals in \cite{How1},
few examples have been successfully computed.
In the present article, we study jumping numbers on an algebraic surface with a rational singularity.  We will show how to find the jumping numbers of an ideal by understanding the numerics of a log resolution.
This result significantly improves our ability to compute these invariants,  and provides important new examples for the continuing study of jumping numbers.

\smallskip

Perhaps the most important application of our method lies in finding the jumping numbers of an embedded curve on a smooth surface.
While recent progress has been made along these lines in \cite{Tarmo}, the algorithm we present is simple to use and unique in that it applies to reducible curves.  Once the singularities of the curve have been resolved, the invariants are found by checking finitely many inequalities between intersection numbers on the resolution.  For emphasis, we will use plane curves in motivating examples throughout this article, and Section 6 is entirely devoted to applications on smooth surfaces.  Proposition \ref{app} shows, for example, that a complete finite-colength ideal in the local ring at a closed point of a smooth surface is simple if and only if 1 is not a jumping number of the ideal.
\smallskip

Our techniques build upon the work of Smith and Thompson in \cite{How}.
Roughly speaking, multiplier ideals are defined by a finite 
number of divisorial conditions on a given resolution. These conditions are by no means independent, and checking 
each of them is often unnecessary. We work to identify which of these conditions are essential near a change in the multiplier ideals. By 
reducing the number of conditions to check, we improve our ability to compute jumping numbers and
understand the information these invariants encode. 

\smallskip

To give a more detailed overview, we briefly summarize our definitions.   Let $\a$ be an ideal sheaf on a complex algebraic surface $X$ with a rational singularity.  Fix a log resolution $\pi :Y \to X$ of the pair $(X,\a)$, with $\a \O_Y = \O_Y(-F)$ and relative canonical divisor $\K$.  If $G$ is a reduced subdivisor of $F$, we say 
 $\lambda \in \Q_{>0}$ is a \emph{candidate jumping number} for $G = E_1 + \cdots + E_k$ when $\ord_{E_i}(\K - \lambda F)$ is an integer for all $i=1, \ldots, k$.  Every jumping number is necessarily a candidate jumping number for some $G$, while not every candidate jumping number is realized as a jumping number.   When a candidate jumping number $\lambda$ for $G$ is a jumping number, we say $\lambda$ is \emph{contributed} by $G$ if
\[
\J(X,\a^\lambda) =\pi_* \O_Y(\lceil \K - \lambda F \rceil) \neq \pi_* \O_Y(\lceil \K- \lambda F \rceil + G).
\]
This contribution is said to be \emph{critical} if, in addition, no proper subdivisor of $G$ contributes $\lambda$.  The content of Theorems \ref{lemma} and \ref{cor} is summarized below, showing how to identify the reduced exceptional divisors which critically contribute a jumping number.

\begin{introtheorem}
Suppose $\a$ is an ideal sheaf on a complex surface $X$ with an isolated rational singularity. Fix a log resolution $\pi: Y \to X$ with $\a\O_Y = \O_Y(-F)$, and a reduced divisor $G = E_1+ \cdots +E_k$  on $Y$ with exceptional support.
\begin{enumerate}[(i)]
\item 
The jumping numbers $\lambda$ critically contributed by $G$ are determined by the intersection numbers \mbox{$\lceil \K - \lambda F \rceil \cdot E_i$}, for $i = 1, \ldots, k$.
\item
If $G$ critically contributes a jumping number, then it is necessarily a connected chain of smooth rational curves.  The ends of $G$ must either
 intersect  three other prime divisors in the support of $F$, or
correspond to a Rees valuation of $\a$.
\end{enumerate}
\end{introtheorem}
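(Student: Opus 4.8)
The plan is to reduce the entire question to one cohomological computation on $G$ and then exploit that $G$ is a forest of smooth rational curves. Write $D_\lambda := \lceil \K - \lambda F\rceil$ and consider the short exact sequence
\[
0 \longrightarrow \O_Y(D_\lambda) \longrightarrow \O_Y(D_\lambda + G) \longrightarrow \O_Y(D_\lambda + G)|_G \longrightarrow 0 .
\]
Since $R^1\pi_*\O_Y(D_\lambda)=0$ by the local vanishing theorem, pushing forward gives $\pi_*\O_Y(D_\lambda+G)/\pi_*\O_Y(D_\lambda)\cong H^0(G,\O_Y(D_\lambda+G)|_G)$, because the singularity is isolated so $G$ maps to the point. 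Hence a candidate jumping number $\lambda$ for $G$ is contributed by $G$ exactly when this group is nonzero, and contributed critically when moreover the analogous group vanishes for every proper subdivisor. Now, a rational surface singularity has a resolution whose exceptional prime divisors are smooth rational curves meeting transversally in a tree; thus $G$ is a forest of $\P^1$'s and $\operatorname{Pic}(G)\cong\Z^k$ via the multidegree. Therefore $\O_Y(D_\lambda+G)|_G$, and likewise its restriction to any subdivisor, is determined by its multidegree, whose $i$-th entry is $D_\lambda\cdot E_i + G\cdot E_i$; since $G$ is fixed this is read off from the numbers $\lceil\K-\lambda F\rceil\cdot E_i$, which proves (i) (using also that jumping numbers only occur at candidate jumping numbers, themselves visible from the jumps of these same intersection numbers).

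For (ii), suppose $G$ critically contributes $\lambda$. If $G=G'\sqcup G''$ were disconnected, then $\O_Y(D_\lambda+G)|_{G'}\cong\O_Y(D_\lambda+G')|_{G'}$, so $H^0(G,\O_Y(D_\lambda+G)|_G)$ splits as a direct sum over $G'$ and $G''$ and the contribution would already come from one of them, contradicting criticality; hence $G$ is connected, a tree of smooth rational curves. The crux — and the step I expect to be the main obstacle — is to pin down the multidegree of $L:=\O_Y(D_\lambda+G)|_G$. For a leaf $E_j$ of $G$ with attaching node $p$ one has $\O_Y(D_\lambda+(G-E_j))|_{G-E_j}\cong L|_{G-E_j}(-p)$, and threading this isomorphism through the sequences $0\to L|_{E_j}(-p)\to L\to L|_{G-E_j}\to 0$ shows that discarding $E_j$ (or its neighbour) keeps the discarded configuration contributing unless the relevant degrees are as small as possible; combined with the observation that if some nonzero section of $L$ vanished identically on a nonempty proper set $Z$ of components then $G\setminus Z$ would already contribute, this forces every $\deg L|_{E_i}\ge 0$, hence $H^1(G,L)=0$ and $\dim H^0(G,L)=\sum_i\deg L|_{E_i}+1$, and — when $G$ has at least two components — in fact $(\lceil\K-\lambda F\rceil+G)\cdot E_i=0$ for every $i$ (for a single curve $E_1$ the contribution condition reads only $(\lceil\K-\lambda F\rceil+E_1)\cdot E_1\ge 0$).

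It remains to translate these numerical identities into geometry. Write $\K-\lambda F = D_\lambda - M$ with $M\ge 0$ the round-up correction, so $\ord_{E_i}(M)=0$ and hence $M\cdot E_i\ge 0$ for $E_i\in G$. Using adjunction $\K\cdot E_i=-2-E_i^2$ (valid since $E_i\cong\P^1$ is exceptional) and $G\cdot E_i=E_i^2+\delta_i$, where $\delta_i$ is the valence of $E_i$ in the dual graph of $G$, the identity $(D_\lambda+G)\cdot E_i=0$ becomes
\[
\delta_i \;=\; 2 + \lambda\,(F\cdot E_i) - M\cdot E_i .
\]
Now $F\cdot E_i\le 0$ because $\a\O_Y=\O_Y(-F)$ is globally generated, and $M\cdot E_i\ge 0$, so $\delta_i\le 2$; thus the connected graph $G$ is a chain. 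At an end $E$ of the chain $\delta=1$ (and for a single curve the displayed inequality reads $\lambda(-F\cdot E)+M\cdot E\ge 2$), forcing $\lambda(-F\cdot E)+M\cdot E\ge 1$. If $-F\cdot E>0$ then $\ord_E$ is a Rees valuation of $\a$. Otherwise $M\cdot E\ge 1$; writing $M\cdot E=\sum_P(\ord_P M)(P\cdot E)$ over prime divisors $P\ne E$ with each $\ord_P M\in(0,1)$ and each $P\cdot E\le 1$ (the resolution being SNC), the sum can reach $1$ only if $E$ meets at least two such $P$, which lie in $\operatorname{Supp}F$; together with the component of $G$ adjacent to $E$, also a component of $\operatorname{Supp}F$, this yields three other prime divisors of $\operatorname{Supp}F$ through $E$, and in the single-curve case $M\cdot E\ge 2$ gives three directly.

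Besides the cohomological rigidity above, the remaining point needing care is the verification used at the very end: that the prime divisors occurring in $M$, and the components of a critically contributing $G$, all lie in $\operatorname{Supp}F$ — this is where one invokes the precise relation between $\K$ and $F$ on the resolution, and it is the only place the structure of $\operatorname{Supp}F$ (as opposed to the whole exceptional locus) enters.
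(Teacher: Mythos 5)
Your route is the paper's route: the short exact sequence plus local vanishing to reduce contribution to $H^0(G,(\lceil \K-\lambda F\rceil+G)|_G)$, the fact that line bundles on an exceptional tree of $\P^1$'s are determined by their multidegrees (Artin), and then adjunction together with $\{\lambda F-\K\}\cdot E\ge 0$ and $F\cdot E\le 0$ to force valence $\le 2$ and to analyze the ends. Part (i), the connectedness argument, and the final numerical translation in (ii) are all correct and match the paper's Sections 4--5.

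The one step you flag as ``the crux'' is, as written, a genuine gap: you assert that critical contribution by a reducible $G$ forces $(\lceil\K-\lambda F\rceil+G)\cdot E_i=0$ for every component, but your justification only delivers the inequality $\ge 0$. The observation that a nonzero section of $L$ cannot vanish identically on a proper nonempty union $Z$ of components (else $G-Z$ would contribute, after twisting down by the attaching points) is correct and does give $\deg L|_{E_i}\ge 0$ for all $i$; however, ``threading the isomorphism through the sequences $0\to L|_{E_j}(-p)\to L\to L|_{G-E_j}\to 0$'' does not by itself rule out a strictly positive degree. Indeed, which proper subdivisor contributes depends on where the excess degree sits, so one must exhibit an explicit nonzero section on a well-chosen subdivisor rather than chase Euler characteristics. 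The paper's argument: if $s|_E\ne 0$ and $\deg L|_E>0$, decompose $G-E$ into connected branches $B_1,\dots,B_r$ attached at points $p_1,\dots,p_r$, choose (after reindexing) a zero $q$ of $s|_E$ with $q\notin\{p_2,\dots,p_r\}$, divide out the corresponding linear factor to get a nonzero section $t$ of the degree-one-lower bundle on $E$, and glue $t$ with suitably rescaled restrictions $s|_{B_i}$ ($i\ge 2$) to produce a nonzero section showing that $G-B_1$ contributes $\lambda$ --- contradicting criticality. You need this (or an equivalent construction) to close the argument. Separately, your worry at the end about $\operatorname{Supp}F$ is legitimate but minor: the divisors entering $M\cdot E$ are the components of $\operatorname{Supp}F\cup\operatorname{Supp}\K$ along which $\lambda F-\K$ is non-integral, and the precise statement in the body of the paper (Theorem 5.1) indeed says ``the support of either $F$ or $\K$''; the introduction's phrasing is a simplification.
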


Smith and Thompson \cite{How} originally defined jumping number contribution for prime divisors.
However, in order to account for all jumping numbers, it is not sufficient to consider prime divisors alone.
By extending the notion of contribution as above, we ensure that every jumping number is critically contributed by some reduced divisor.  This observation and the Theorem above are the basis for the algorithm presented in Section \ref{alg}.
Further, critical contribution by a reduced exceptional divisor with multiple components is readily observed.
If $G$ is a connected chain of exceptional divisors on a fixed birational modification $\pi$ of the plane over the origin, there is an ideal $\a$ having $\pi$ as a log resolution with a jumping number critically contributed by $G$.

\smallskip

The structure of this article is as follows.
In Section 2, we briefly recall the definitions of multiplier ideals, jumping numbers, and a rational singularities on algebraic surfaces.  We also review without proof the properties of each which will be necessary for our calculations.  Section 3 motivates and defines the notion of jumping number contribution by a reduced divisor on a fixed log resolution.  The numerical criteria for critical contribution
is derived in Section 4, the technical heart of the paper, and we deduce that a critically contributing exceptional collection is a chain of rational curves in Section 5.  At the beginning of Section 6, we outline the algorithm for computing jumping numbers.  The remainder of the section is devoted to two substantial applications, 
where we compute the jumping numbers of the maximal ideal at the singular point in a Du Val (Example~\ref{Du Val}) or toric surface singularity (Example~\ref{Toric}).  In the final section, we specialize to the smooth case in order to discuss various questions and applications.

\smallskip

We would like to thank Mattias Jonsson, Karen Smith, 
Karl Schwede, Alan Stapledon, and several others in the community at the University of Michigan for useful conversations and support.  

\section{Multiplier Ideals on Rational Surface Singularities}

We begin by fixing notation and recalling the precise definition of a multiplier ideal.  Unless explicitly altered, these conventions shall remain in effect throughout.
Let $R$ be the local ring at a closed point on a normal complex algebraic surface with function field $\mathcal{K} = \Frac(R)$.  Recall that $R$ is said to be a rational singularity if there exists a resolution of singularities $\pi : Y \to X = \spec(R)$ such that $ H^1(Y,\O_Y) = 0$.  The theory of rational singularities of algebraic surfaces was first developed by Artin in \cite{
Artin1} and \cite{Artin}, and studied extensively by Lipman in \cite{LipRat}.  We shall need various facts proved therein, and cite them without proof as necessary.

\smallskip

Recall that a log resolution of an ideal $\a \subseteq R$ is a proper birational morphism $\pi : Y \to X$ such that
\begin{itemize}
\item
Y is smooth, and  $\a \O_Y$ is the locally principal ideal sheaf of an effective divisor $F$;
\item
The prime exceptional divisors and the irreducible components of $F$ are all smooth and intersect transversely.
\end{itemize}
When $X$ is smooth, it is well known that these resolutions exist; further, any such is a composition of maps obtained by blowing up closed points.  The same is also true when $X$ has rational singularities; see Theorem 4.1 of \cite{LipRat}.

\smallskip

Since $X$ is normal, there is a well defined linear equivalence class of canonical Weil divisors.  We may choose a representative $K_X$ for this class by setting $K_X= \pi_* K_Y$, where $K_Y$ is a canonical divisor on $Y$.  It is shown in \cite{LipRat}, Proposition 17.1, that the divisor class group of $R$ is finite.  Hence, there is an integer $m > 0$ such that $mK_X$ is a Cartier divisor.  In particular, $\pi^*K_X = \frac{1}{m} \pi^*(mK_X)$ is a well-defined \mbox{$\Q$-divisor} on $Y$.  By construction, there is an exceptionally supported \mbox{$\Q$-divisor} $\K$  such that
\[
K_Y = \pi^* K_X + \K.
\]
One checks $K_\pi$ is independent of the choice of canonical divisor on $Y$, which we refer to as the relative canonical divisor.  In general, $K_\pi$ is neither integral nor effective; however, when $X$ is smooth, $\K$ is both as it is defined by the Jacobian determinant of $\pi$.

\smallskip

If $D = \sum d_j D_j$ is a \mbox{$\Q$-divisor}, set $\lfloor D \rfloor = \sum \lfloor d_j \rfloor D_j$ and $\{D\} = \sum \{d_j\} D_j$ to be the integer and fractional parts of $D$, respectively.  Note that $D = \lfloor D \rfloor + \{D\}$.  Further, let $\lceil D \rceil = - \lfloor - D \rfloor$ be the \mbox{round-up} of $D$.  In general, these operations do not behave well with respect to restriction or pull-back.

\begin{definition}  
The \emph{multiplier ideal} of the pair $(X,\a)$ with coefficient $\lambda \in \Q_{>0}$ is the ideal $
\J(X, \a^\lambda) = \pi_* \O_Y(\lceil \K - \lambda F \rceil)$ in $R$.  
\end{definition}

For an introduction in the smooth case, we refer the reader to~\cite{Blickle}.  One immediately checks that this definition is independent of the choice of log resolution; however, we will generally work with a fixed resolution.  Since $X$ has rational singularities, this is motivated in part by the existence of a minimal log resolution, \mbox{i.e.} a log resolution through which all others must factor.  See \cite{LipRat} for further discussion.

\smallskip

Multiplier ideals have emerged as a fundamental tool in algebraic and analytic geometry.  A detailed account of their properties, applications, and further references, may be found in~\cite{Laz}.  
Here we briefly mention two important results to which we will refer in later sections. 

\begin{itemize}
\item  The \emph{local vanishing} property of multiplier ideals states that  $R^j\pi_* \O_Y( \lceil \K - \lambda F \rceil) = 0$ for all $j > 0$, and all $\lambda \in \Q_{>0}$.
\item According to \emph{Skoda's Theorem}, $\J(X, \a^\lambda) = \a \, \J(X, \a^{\lambda - 1})$ for all $\lambda > \dim(R) = 2$.
\end{itemize}

\noindent 
The first result essentially follows from the vanishing theorems of Kawamata and Viehweg,\footnote{
See generalization 9.1.22 of  \cite{Laz} for a sketch of the proof of local vanishing in this generality, or \mbox{Theorem~1-2-3} of \cite{KMM87} for a complete argument.  The original vanishing theorems appear in  \cite{Kawamata82} and \cite{Viehweg82}.}
 and can be used give a proof of the second statement.

\smallskip

By identifying the function fields of $X$ and $Y$, each  prime divisor $E$ appearing in $\K$ or $F$ corresponds to a discrete valuation $\ord_E$ on $\mathcal K = \Frac(R)$ with value group $\Z$. To check whether a function $f \in R$ is in $\J(X, \a^\lambda)$, one must show for all such $E$ that
\begin{equation}
\label{ord}
\ord_E f \geq \ord_E(\lfloor \lambda F - \K \rfloor).
\end{equation}
Consider what happens as one varies $\lambda$.  Increasing $\lambda$ slightly does not change (\ref{ord}); thus, $\J(X, \a^\lambda) = \J(X, \a^{\lambda + \epsilon})$ for sufficiently small $\epsilon > 0$.  However, continuing to increase $\lambda$ further will cause the coefficient of $E$ in $\lfloor \lambda F -\K \rfloor$ to change, and this sometimes results in a jump in the mutliplier ideals $\J(X, \a^\lambda)$.

\begin{definition}
We say that $\lambda \in \Q_{>0}$ is a \emph{candidate jumping number} for a prime divisor $E$  appearing in $F$ if the order of vanishing necessary for membership \eqref{ord} in the multiplier ideals of $(X,\a)$ changes at $\lambda$, \mbox{i.e.} $\ord_E(\lambda F - \K)$ is an integer.  If $G$ is a reduced divisor on $Y$,  a candidate jumping number for $G$ is a common candidate jumping number for the prime divisors in its support.
The coefficient $\lambda \in \Q_{>0}$ is a  \emph{jumping number} of the pair $(X, \a)$ if $\J(X,\a^{\lambda- \epsilon}) \neq \J(X,\a^\lambda)$ for all $\epsilon > 0$.  The smallest jumping number is the \emph{log canonical threshold} of the pair $(X,\a)$.
\end{definition}

Since $X$ is normal, note that condition \eqref{ord} is trivial for $\ord_E(\lfloor \lambda F - \K \rfloor) \leq 0$.  
We see explicitly that the nontrivial candidate jumping numbers for $E$ are $\{\frac{\ord_E \K + m}{\ord_E F} : m \in \Z_{>0}\}$.  
The jumping numbers of $(X,\a)$ are in general strictly contained in the union of the candidate jumping numbers of all of the prime divisors appearing in $F$.  In particular, they form a discrete set of invariants. 
Furthermore, by Skoda's Theorem, the jumping numbers of a pair $(X,\a)$ are eventually periodic; $\lambda > 2$ is a jumping number if and only if $\lambda - 1$ is a jumping number.

\section{Jumping Numbers Contributed by Divisors}
\label{Defs}

In order to compute the jumping numbers of $(X, \a)$ from a log resolution $\pi:Y \to X$, we must
first understand
the causes in the underlying jumps of the multiplier ideals.  To this end, the following definitions allow us to attribute the appearance of a jumping number to certain reduced divisors on $Y$.

\begin{definition} 
\label{chain}
Let $G$ be a reduced divisor on $Y$ whose support is contained in the support of $F$.  We will say $G$ \emph{contributes} a candidate jumping number $\lambda$ if 
\[
\J(X, \a^\lambda) \subsetneq \pi_* \O_{Y}(\lceil K_\pi - \lambda F \rceil  + G).
\]
This contribution is said to be \emph{critical} if, in addition, no proper subdivisor of $G$ contributes $\lambda$, i.e.
\[
\J(X, \a^\lambda) = \pi_* \O_{Y}(\lceil K_\pi - \lambda F \rceil  + G')
\]
for all divisors $G'$ on $Y$ such that $0 \leq G' < G$.
\end{definition}

Note that this is an extension of Definition 5 from \cite{How}, where Smith and Thompson introduced
jumping number contribution for prime divisors.
Further, if a jumping number is contributed by a prime divisor $E$, this contribution is automatically critical.  It is easy to see every jumping number is critically contributed by some reduced divisor on $Y$.   The following example illustrates the original motivation for defining jumping number contribution.

\begin{example}
\label{firstexample} 
Suppose $R$ is the local ring at the origin in $\A^2$, and $C$ is the germ of the analytically irreducible curve defined by the polynomial $x^{13} - y^5 = 0$.  The minimal log resolution $\pi : Y \to X$ of $C$ is a sequence of six blow-ups along closed points (there is a unique singular point on the transform of $C$ for the first three blow-ups, after which it takes an additional three blow-ups to ensure normal crossings).  If $E_1, \ldots, E_6$ are the exceptional divisors created, one checks
\[
\pi^*C =C+  5E_1 + 10 E_2 + 13E_3 + 25E_4 + 39 E_5 + 65 E_6
 \qquad
 \K = E_1 +2E_2 + 3E_3 + 6E_4 + 10E_5 + 17E_6.
 \]
 Thus, the nontrivial candidate jumping numbers of $E_1$ are $\{ \frac{1 + m}{5} : m \in \Z_{>0}\}$, whereas those for $E_6$ are $\{ \frac{17 + m}{65} : m \in \Z_{>0}\}$.  One can 
 compute%
 \begin{footnote}{The polynomial $f(x,y) = x^{13} - y^5$ is nondegenerate with respect to its Newton polyhedron, and thus it is a theorem of Howald \cite{How2} that the jumping numbers of $f$ less than 1 coincide with those of its term ideal $(x^{13},y^5)$.  One may then use the explicit formula \cite{How1} for the jumping numbers of a monomial ideal to achieve the desired result.  This argument is essentially repeated in Example 3.6 of \cite{Ein}, and discussed at greater length in Section 9.3.C of \cite{Laz}.  Note that since this curve is analytically irreducible, the result also follows from \cite{Tarmo}.
It is also possible to use the numerical results of Section \ref{main} to check this directly.}
 \end{footnote} that  the jumping numbers of the pair $(\A^2,C)$ are precisely
 \[
 \left\{ \frac{13(r+1) + 5(s+1)}{65} + t \quad  \Big| \quad r,s,t \in \Z_{\geq0} \mbox{ and } \frac{13(r+1) + 5(s+1)}{65}<1\right\} \, \,\cup \, \, \Z_{>0}.
 \]
Note that the jumping numbers less than one are all candidate jumping numbers for $E_6$, but for no other $E_i$.  Thus, for any jumping number $\lambda < 1$  and sufficiently small $\epsilon>0$, we have
\[
\J(X, \lambda C) \subsetneq \pi_*\O_X(\lceil \K -  \lambda \pi^*C \rceil + E_6)  = \J(X, (\lambda -\epsilon)C).
\]
In other words, the jump in the multiplier ideal at $\lambda$ is due solely to the change in condition (\ref{ord}) along $E_6$.  According to Definition \ref{chain}, all of the jumping numbers less than one are contributed by $E_6$, and are not contributed by any other divisor.  
\end{example}

In general, however, the situation is often far less transparent. Distinct prime divisors often have common candidate jumping numbers.  In some cases, as the next example from \cite{How} shows, these prime divisors may separately contribute the same jumping number.  In others, collections of these divisors may be needed to capture a jump in the multiplier ideals.

\begin{example}
\label{secondexample}
Suppose $R$ is the local ring at the origin in $\A^2$, and $C$ is the germ of the plane curve defined by the polynomial $(x^3 - y^2)(x^2 - y^3) = 0$ at the origin.  The minimal log resolution $\pi$ has five exceptional divisors: $E_0$ obtained from blowing up the origin; $E_1$ and $E_1'$ obtained by blowing up the two intersections of $E_0$ with the transform of the curve $C$ (both points of tangency); and $E_2$ (respectively $E_2'$) obtained by blowing up the intersection of the three smooth curves $C$, $E_0$, and $E_1$ (respectively, the three smooth curves $C$, $E_0$, and $E_1'$).  One checks
\[
\pi^*C = C + 4E_0 + 5(E_1 + E_1') + 10(E_2 + E_2') 
\qquad
\K =  E_0 + 2(E_1 + E_1') + 4(E_2 + E_2')
\]

\smallskip

\begin{center}
 \includegraphics[width=1.3in]{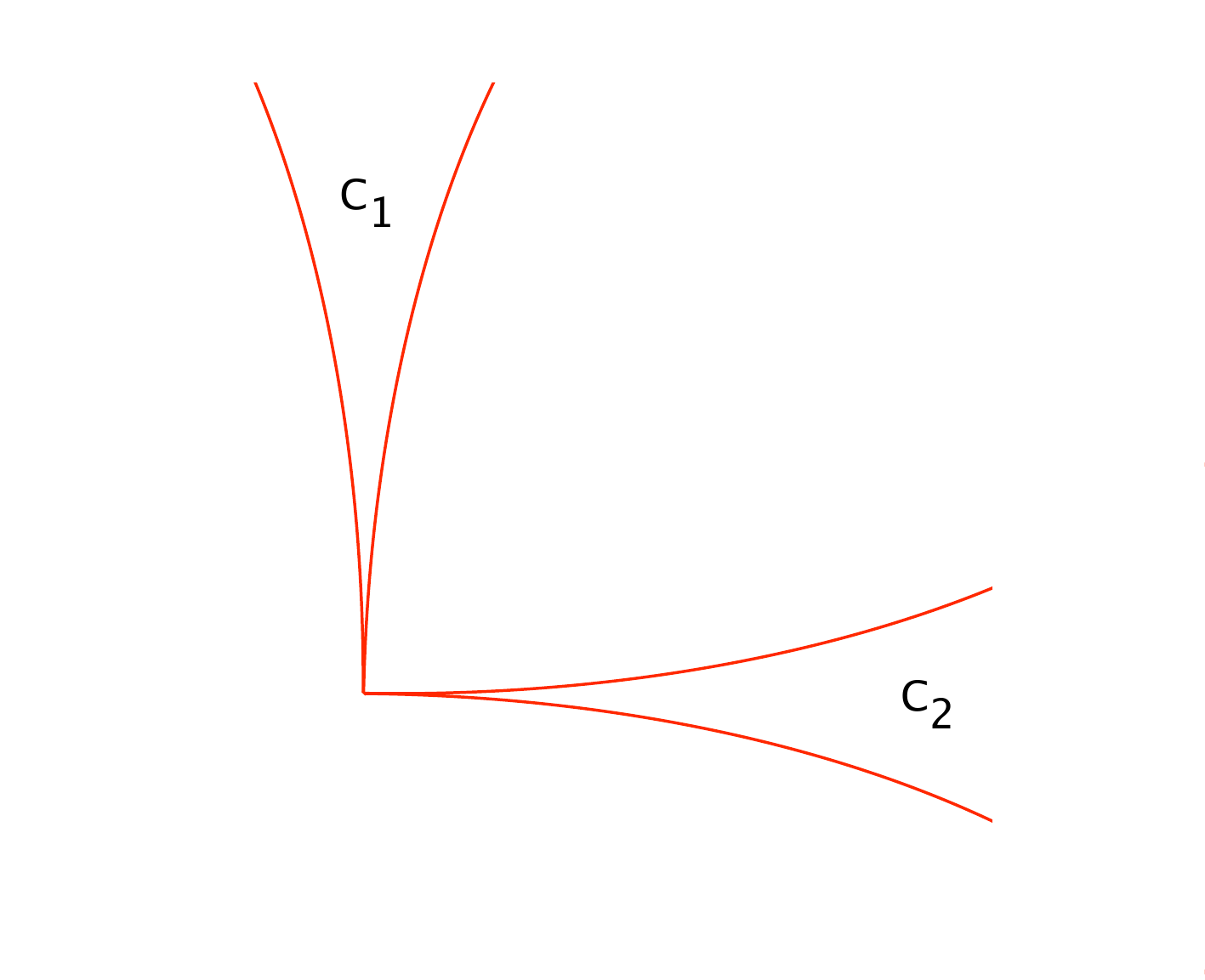}
 \raisebox{.55in}{$\arrow{w}$} \hspace{-.1in}
  \includegraphics[width=1.3in]{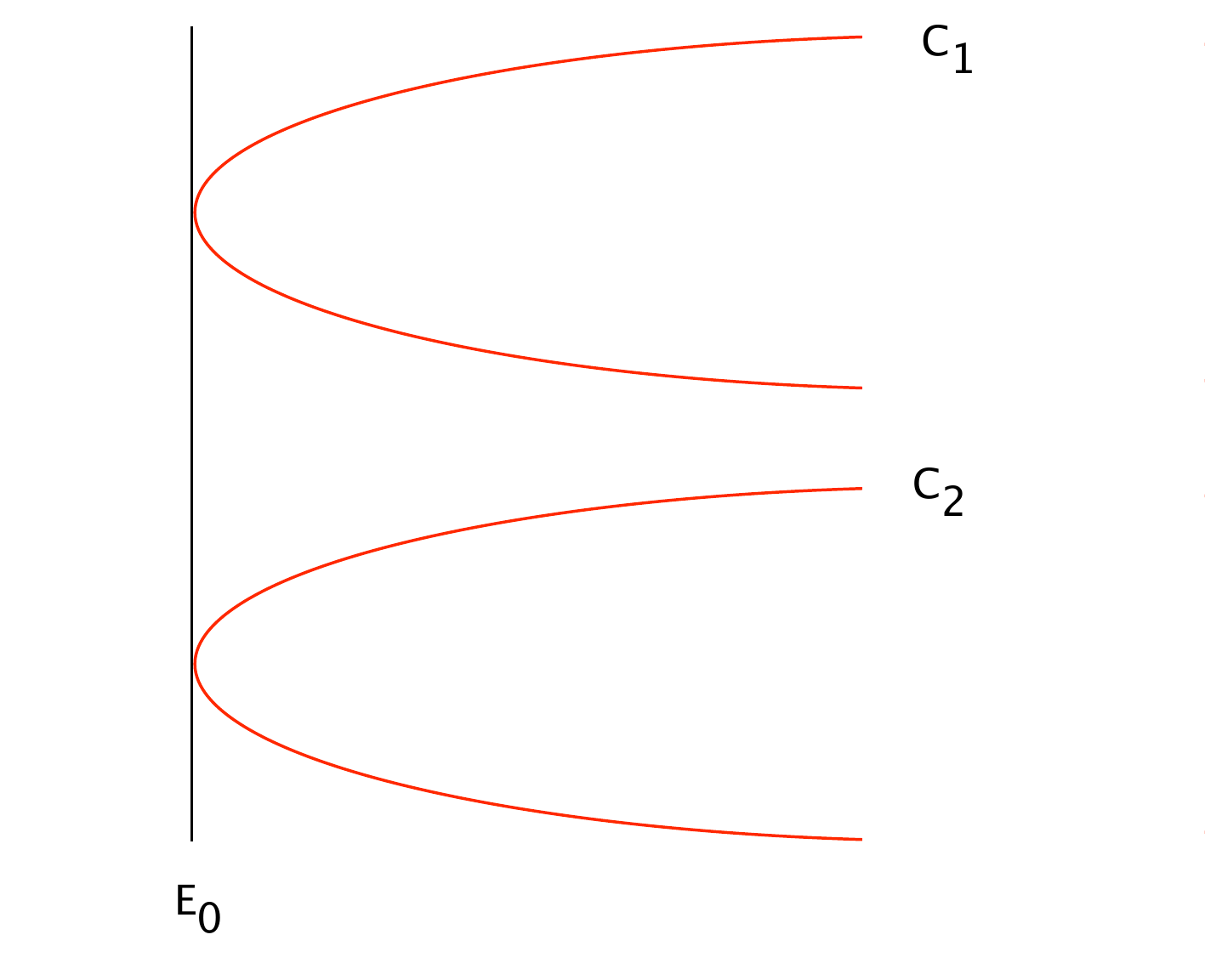}
 \hspace{.1in}
 \raisebox{.55in}{$\arrow{w}$} \hspace{-.1in}
  \includegraphics[width=1.3in]{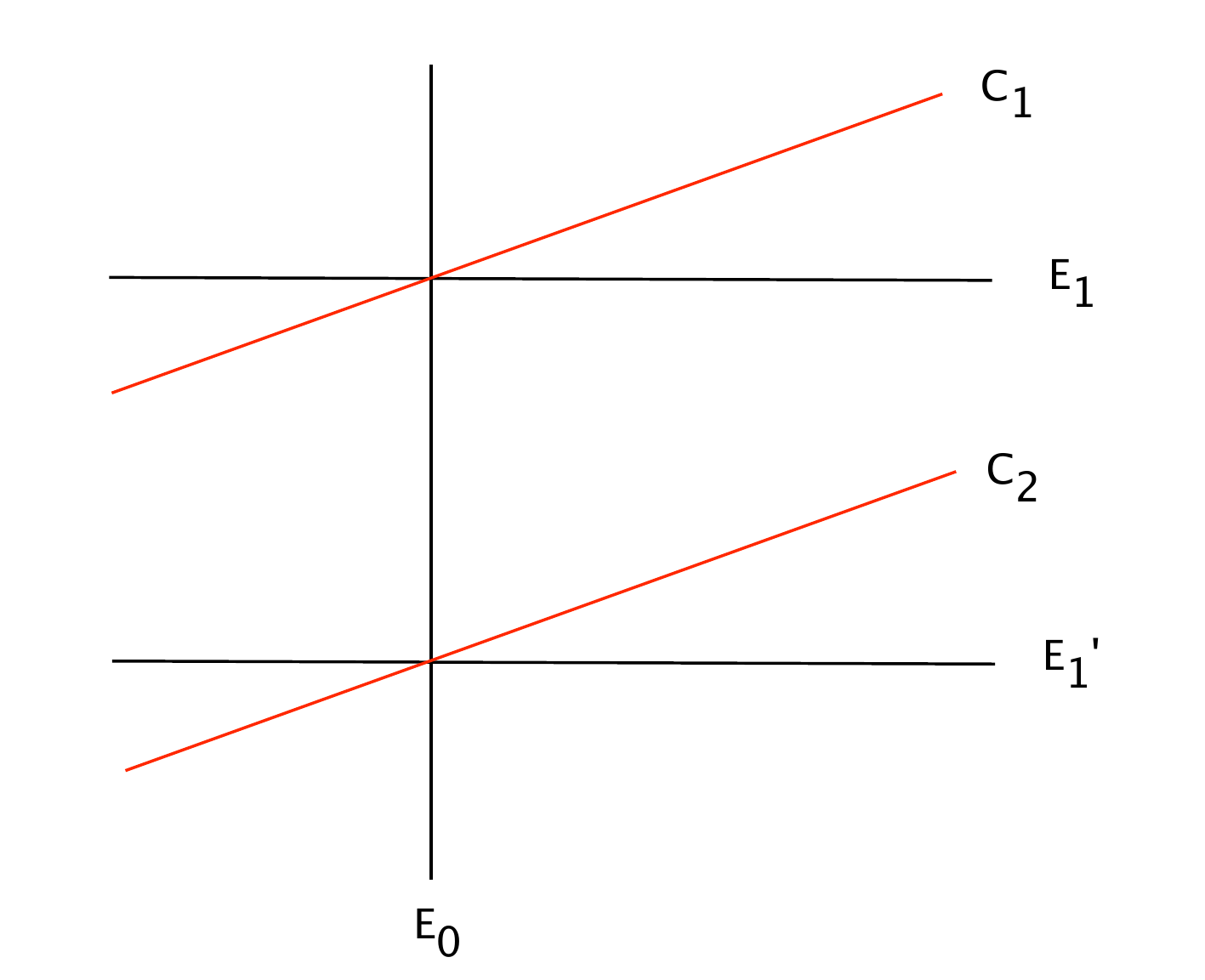}
 \hspace{.1in}
 \raisebox{.55in}{$\arrow{w}$} \hspace{-.1in}
  \includegraphics[width=1.3in]{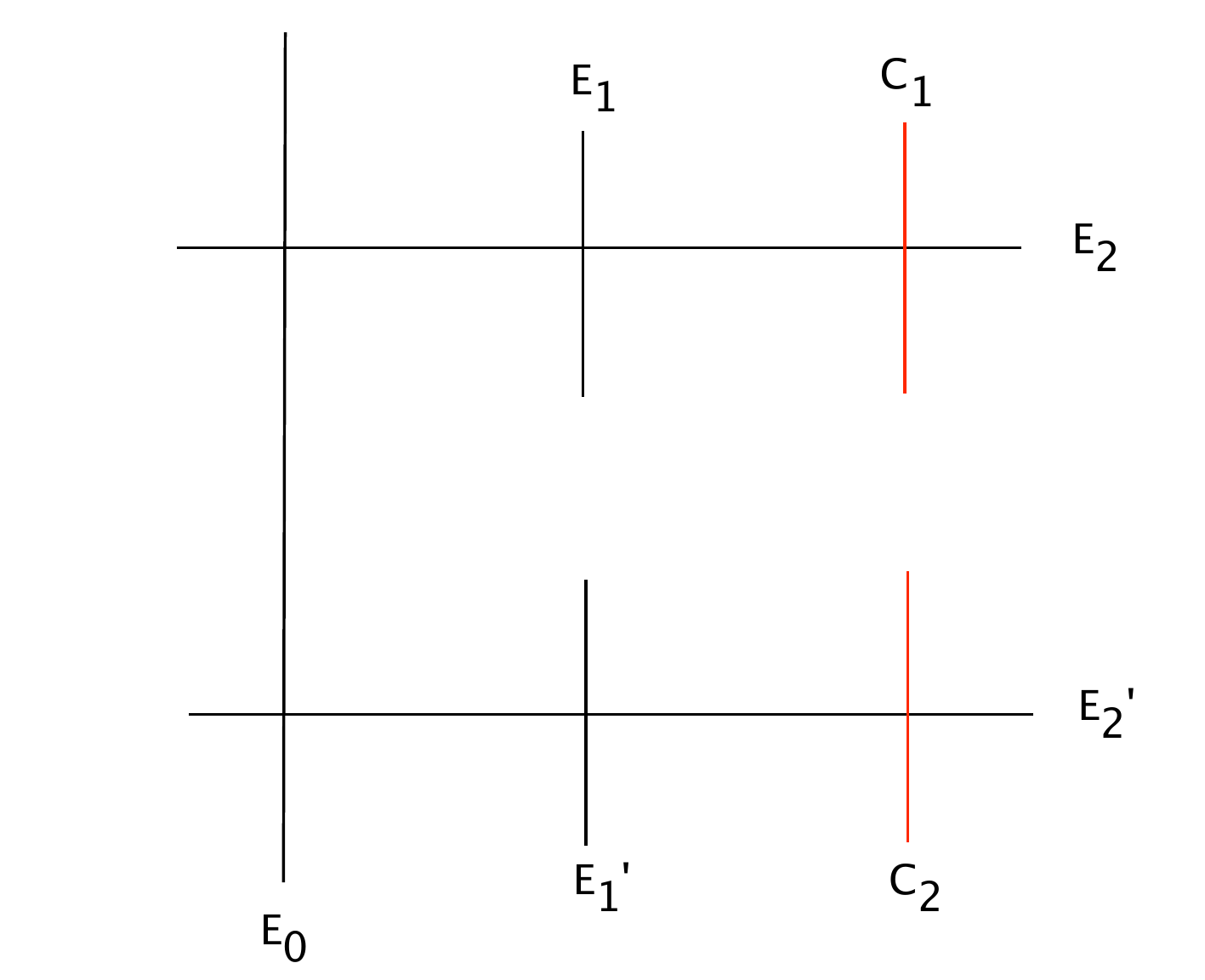}
  \end{center}
  
 \smallskip

\noindent so that the log canonical threshold is $\frac{1}{2}$.  Here, we have $\lceil \K - \frac{1}{2} \pi^*C \rceil =  -E_0 - E_2 - E_2'$, so that the three new conditions for membership in $\J(\frac{1}{2} C)$ are vanishing along $E_0,E_2, E_2'$.  However, and herein lies the problem in determining the precise cause of the jump in the multiplier ideal, these are not independent conditions.  Requiring vanishing along any of these three divisors automatically guarantees vanishing along the others. Thus, instead of attributing the jump to any prime divisor, it seems natural to suggest that the collection $E_0 + E_2 + E_2'$ is responsible.  According to Definition \ref{chain}, $E_0 + E_2 + E_2'$ critically contributes~$\frac{1}{2}$.  
Further, it is shown in~\cite{How} that $\frac{9}{10}$ is a jumping number contributed by either $E_2$ or $E_2'$.  One may even argue there is a sense in which the collection $E_2 + E_2'$ is responsible for this jump.  Indeed, for sufficiently small $\epsilon > 0$, we have
\[
\J(X, (\frac{9}{10} -  \epsilon)C) \subsetneq \pi_*\O_X(\lceil \K -  \frac{9}{10} \pi^*C \rceil + E_2) \subsetneq \pi_*\O_X(\lceil \K -  \frac{9}{10} \pi^*C \rceil + E_2+E_2')  = \J(X, \frac{9}{10}C).
\]
In this case, the jumping number $\frac{9}{10}$ is contributed by $E_2 + E_2'$; however, this contribution is not critical as either $E_2$ or $E_2'$ also contribute $\frac{9}{10}$.

\end{example}

\begin{remark}
Contribution and critical contribution are somewhat subtle to formulate valuatively.  If $G = E_1 + \cdots + E_k$ critically contributes $\lambda$ to $(X,\a)$, one can show there is some $f \in R$ which is not in $\J(X,\a^\lambda)$ because it fails to satisfy condition \eqref{ord} precisely along  $E_1, \ldots, E_k$, and $G$ is a minimal collection with this property.  This depends not only on  the divisorial valuations appearing in $G$, but all those appearing in $F$.  In particular, there is no reason to believe this is independent of the chosen resolution.  However, when $X$ is smooth, it is possible to formulate a notion of contribution which is model independent by considering all possible resolutions simultaneously.  Explicitly,
it is shown in \cite{Mattias} that the dual graphs of all  resolutions fit together in a nice way to give the so-called valuative tree, and a reduced effective divisor on $Y$ corresponds  in a natural way to a union of subtrees of the valuative tree. Similar ideas were explored in \cite{Mattias2}.
\end{remark}

\section{Numerical Criterion for Critical Contribution}
\label{NumCrit}

We now begin working towards a numerical test for jumping number contribution.  The first step is to interpret contribution cohomologically.

\begin{proposition} 
Suppose that $\lambda$ is a candidate jumping number for the reduced divisor $G$.  Then $\lambda$ is realized as a jumping number for $(X,\a)$ contributed by $G$ if and only if
\[
H^0 (G,(\lceil K_\pi - \lambda F \rceil  + G)\vert_G) \neq 0.
\]
Furthermore, this contribution is critical if and only if we have 
\[
H^0(G', \left. (\lceil K_\pi - \lambda F \rceil  + G')\right|_{G'}) = 0
\]
for all divisors $G'$ on $Y$ such that $0 \leq G' < G$.
\end{proposition}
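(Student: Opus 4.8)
The plan is to compare the two sheaves $\mathcal{O}_Y(\lceil \K - \lambda F\rceil)$ and $\mathcal{O}_Y(\lceil \K - \lambda F\rceil + G)$ via the short exact sequence of sheaves on $Y$
\[
0 \to \O_Y(\lceil \K - \lambda F\rceil) \to \O_Y(\lceil \K - \lambda F\rceil + G) \to \left.\big(\lceil \K - \lambda F\rceil + G\big)\right|_G \to 0,
\]
where the quotient is the restriction of the line bundle $\O_Y(\lceil \K - \lambda F\rceil + G)$ to the (in general non-reduced-looking but here reduced) subscheme $G$; more precisely the quotient is $\O_Y(\lceil \K - \lambda F\rceil + G)\otimes \O_G$. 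First I would push this forward along $\pi$ and form the long exact sequence in cohomology. The key point is that $\J(X,\a^\lambda) = \pi_*\O_Y(\lceil \K - \lambda F\rceil)$ by definition, and $\pi_*\O_Y(\lceil \K - \lambda F\rceil + G)$ is exactly the sheaf appearing in the definition of contribution, so $G$ contributes $\lambda$ iff the first map of pushforwards is \emph{not} surjective, i.e. iff the connecting map to $R^1\pi_*\O_Y(\lceil \K - \lambda F\rceil)$ fails to be injective — equivalently, iff $\pi_*\big((\lceil \K - \lambda F\rceil + G)|_G\big) \neq 0$. But $G$ has exceptional support (it maps to the singular point), so $\pi_*$ of any coherent sheaf supported on $G$ is just its space of global sections viewed as an $R$-module, and is nonzero iff $H^0\big(G, (\lceil \K - \lambda F\rceil + G)|_G\big)\neq 0$. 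This gives the first equivalence.

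Here is where I need to be careful, and this is the main obstacle: a priori, failure of surjectivity of $\pi_*\O_Y(\lceil \K - \lambda F\rceil) \to \pi_*\O_Y(\lceil \K - \lambda F\rceil + G)$ is only a \emph{sufficient} condition coming from nonvanishing of $H^0(G,\cdots)$; I must rule out the possibility that $H^0(G,\cdots)\neq 0$ but the image already lands entirely in $\pi_*\O_Y(\lceil \K - \lambda F\rceil + G)$ being equal to $\J(X,\a^\lambda)$ after all — that is, that the connecting homomorphism $\pi_*\big((\cdots)|_G\big) \to R^1\pi_*\O_Y(\lceil \K - \lambda F\rceil)$ is injective. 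This is exactly where \emph{local vanishing} enters: $R^1\pi_*\O_Y(\lceil \K - \lambda F\rceil) = 0$. With this vanishing, the long exact sequence forces $\pi_*\O_Y(\lceil \K - \lambda F\rceil + G) \to \pi_*\big((\cdots)|_G\big)$ to be surjective, so the cokernel of $\J(X,\a^\lambda) \hookrightarrow \pi_*\O_Y(\lceil \K - \lambda F\rceil + G)$ is precisely $H^0(G, (\lceil \K - \lambda F\rceil + G)|_G)$ (as an $R$-module). Hence $G$ contributes $\lambda$ (strictly enlarges the multiplier ideal) if and only if this $H^0$ is nonzero. I should double-check that $\lceil \K - \lambda F\rceil + G$ still has the form $\lceil \K - \lambda' F\rceil$ or otherwise verify that local vanishing applies to $\O_Y(\lceil \K - \lambda F\rceil + G)$; since $\lambda$ is a candidate jumping number for $G$, the coefficient of each $E_i$ in $\lceil \K - \lambda F\rceil + G$ equals $\ord_{E_i}(\K - \lambda F) + 1 = \lceil \K - (\lambda-\epsilon)F\rceil$-coefficient for small $\epsilon$, so $\O_Y(\lceil \K - \lambda F\rceil + G) = \O_Y(\lceil \K - (\lambda - \epsilon)F\rceil)$ for suitable small $\epsilon > 0$ (adjusting only along $\mathrm{Supp}(G)$), and local vanishing applies to the latter. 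This reconciliation is the technical crux.

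For the ``furthermore'' clause, I would simply apply the first equivalence with $G$ replaced by each $G'$ with $0 \le G' < G$: by Definition~\ref{chain}, the contribution of $G$ is critical exactly when no such proper subdivisor $G'$ contributes $\lambda$, and by the first part $G'$ contributes $\lambda$ iff $H^0\big(G', (\lceil \K - \lambda F\rceil + G')|_{G'}\big) \neq 0$. (One subtlety: $\lambda$ need not be a candidate jumping number for $G'$, but $\lceil \K - \lambda F\rceil + G'$ still makes sense and the exact-sequence argument goes through verbatim, using local vanishing for $\O_Y(\lceil \K - \lambda F\rceil)$ again; if $\lambda$ is not a candidate jumping number for some $E_i \subseteq G'$ then one can check $H^0$ automatically vanishes or the definition of contribution by $G'$ is vacuous, so no harm is done.) Thus criticality is equivalent to the vanishing of all these $H^0$'s, which is the statement. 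The only real work is the first part; the rest is a clean rephrasing.
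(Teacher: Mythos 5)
Your proof is correct and follows essentially the same route as the paper: the same short exact sequence on $Y$, pushed forward to $X$, with local vanishing killing $R^1\pi_*\O_Y(\lceil \K - \lambda F\rceil)$ so that the cokernel of $\J(X,\a^\lambda) \hookrightarrow \pi_*\O_Y(\lceil \K - \lambda F\rceil + G)$ is exactly $H^0(G,(\lceil \K - \lambda F\rceil + G)|_G)$, and the criticality clause read off directly from the definition. One small remark: the ``technical crux'' you flag is not actually needed, since local vanishing is applied only to the subsheaf $\O_Y(\lceil \K - \lambda F\rceil)$ (the multiplier ideal sheaf itself) and never to the middle term --- and in fact your proposed identification $\O_Y(\lceil \K - \lambda F\rceil + G) = \O_Y(\lceil \K - (\lambda-\epsilon)F\rceil)$ can fail when prime divisors outside $G$ also have $\lambda$ as a candidate jumping number, but since that step is superfluous the argument stands as written.
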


\begin{proof}
Consider the short exact sequence
\[
0 \arrow{e} \O_Y(\lceil K_\pi - \lambda F \rceil) \arrow{e} \O_Y(\lceil K_\pi - \lambda F \rceil  + G)  \arrow{e} \O_G(\left. (\lceil K_\pi - \lambda F \rceil  + G)\right|_G) \arrow{e} 0.
\]
on $Y$. Pushing down to $X$, we arrive at
\[
0 \arrow{e} \J(X, \a^\lambda) \arrow{e} \pi_* \O_{Y}(\lceil K_\pi - \lambda F \rceil  + G)
\arrow{e} \pi_*\O_{G}(\left. (\lceil K_\pi - \lambda F \rceil  + G)\right|_G) \arrow{e} R^1\pi_* \O_Y(\lceil K_\pi - \lambda F \rceil) \arrow{e} \cdots
\]
However, local vanishing for multiplier ideals guarantees $R^1\pi_* \O_Y(\lceil K_\pi - \lambda F \rceil) = 0$.  In particular, we see that $G$ contributes a common candidate jumping number $\lambda$ for $E_{1}, \ldots, E_{k}$  to the pair $(X,\a)$ if and only if $\pi_* \O_{G}((\lceil K_\pi - \lambda F \rceil  + G)\vert_G) = H^0 (G,(\lceil K_\pi - \lambda F \rceil  + G)\vert_G) \neq 0$.  The second statement is immediate from the definition of critical contribution.
\end{proof}

\begin{corollary}
If $G$ critically contributes a jumping number $\lambda$, then $G$ is connected.
\end{corollary}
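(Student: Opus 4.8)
The plan is to argue by contradiction, leveraging the cohomological criterion just established together with the additivity of cohomology over a disjoint union. Suppose $G$ is disconnected; then we may write $G = G_1 + G_2$ with $G_1, G_2$ nonzero reduced divisors whose supports are disjoint. The first thing to check is that $\lambda$ remains a candidate jumping number for each $G_i$: this is immediate, since the prime components of $G_i$ are among those of $G$, and a common candidate jumping number for the components of $G$ is in particular one for the components of $G_i$.

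Next I would record the splitting coming from disjointness. Because $G_1$ and $G_2$ share no point of $Y$, the closed subscheme $G$ is the disjoint union $G_1 \sqcup G_2$, and along $G_i$ the divisor $G$ restricts to $G_i|_{G_i}$ (the other summand, being disjoint, contributes nothing). Hence
\[
\O_G\big((\lceil K_\pi - \lambda F \rceil + G)|_G\big) \;\cong\; \O_{G_1}\big((\lceil K_\pi - \lambda F \rceil + G_1)|_{G_1}\big) \oplus \O_{G_2}\big((\lceil K_\pi - \lambda F \rceil + G_2)|_{G_2}\big),
\]
and, taking global sections,
\[
H^0\big(G, (\lceil K_\pi - \lambda F \rceil + G)|_G\big) \;\cong\; H^0\big(G_1, (\lceil K_\pi - \lambda F \rceil + G_1)|_{G_1}\big) \oplus H^0\big(G_2, (\lceil K_\pi - \lambda F \rceil + G_2)|_{G_2}\big).
\]

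I would then finish as follows. Since $G$ contributes $\lambda$, the preceding Proposition gives that the left-hand group is nonzero, so one of the two summands on the right is nonzero --- say the one indexed by $G_1$. Applying the Proposition once more, this time to $G_1$ (which is legitimate, as $\lambda$ is a candidate jumping number for $G_1$), we deduce that $G_1$ contributes $\lambda$. But $0 \leq G_1 < G$, contradicting the criticality of the contribution of $G$. Therefore $G$ must be connected. There is no serious obstacle in this argument; the only points requiring care are the identity $G|_{G_i} = G_i|_{G_i}$, which is exactly where disjointness of the two pieces is used, and the observation that $\lambda$ is still a candidate jumping number for each piece so that the Proposition may be reapplied.
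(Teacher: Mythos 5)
Your argument is correct and is essentially the paper's own proof: decompose a disconnected $G$ into disjoint pieces, use disjointness to split $H^0$ of the restricted sheaf as a direct sum over the pieces, and conclude via the cohomological criterion that one piece already contributes $\lambda$, contradicting criticality. The extra care you take with $G|_{G_i} = G_i|_{G_i}$ and with $\lambda$ remaining a candidate jumping number for each piece is sound and only makes explicit what the paper leaves implicit.
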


\begin{proof}
By way of contradiction, suppose
we may write $G = G' + G''$ giving a separation, where $0 < G', G'' < G$ and $G', G''$ are disjoint.  Then we have
\[
H^0(G,(\lceil K_\pi - \lambda F \rceil  + G)\vert_G) = H^0(G',(\lceil K_\pi - \lambda F \rceil  + G')\vert_{G'}) \oplus H^0(G'',(\lceil K_\pi - \lambda F \rceil  + G'')\vert_{G''}).
\]
Thus, if $G$ contributes a jumping number $\lambda$ to the pair $(X,\a)$, either $G$ or $G'$ must also contribute $\lambda$.  In particular, $G$ does not critically contribute $\lambda$.
\end{proof}

Suppose now that $G$ is a reduced divisor on $Y$ with exceptional support.  The prime exceptional divisors of $\pi$ are all smooth rational curves intersecting transversely, and there are no loops\footnote{When $X$ is smooth, this statement can be shown by induction on the number of blow-ups in $\pi$.  More generally, Proposition 1  of \cite{Artin1}  states that rational singularities are equivalent to $p_a(Z) \leq 0$ for all effective exceptional divisors $Z$, where $p_a(Z) = 1 - \chi(Z)$ denotes the arithmetic genus.} of exceptional divisors.
We therefore assume that  $G= E_1 + \cdots + E_k$ is a nodal tree of smooth rational curves.  
A global section $s$ of $\O_G(\left.(\lceil K_\pi - \lambda F \rceil  + G)\right|_G)$ is equivalent to  a collection of global sections $s_{j}$ of $\O_{E_{j}}(\left.(\lceil K_\pi - \lambda F \rceil  + G)\right|_{E_{j}})$ for $j = 1, \ldots, k$ which agree on the intersections.   Indeed, this statement is easy verified for two rational curves intersecting transversely, and the general case follows by induction on $k$.  Since the existence of nonzero global sections on smooth rational curves is equivalent to having non-negative degree, we now show critical contribution by reduced exceptional divisors can be checked numerically.  When $G$ is prime and $X$ is smooth, this criterion was given in \cite{How}.

\begin{theorem}
\label{lemma}
Denote by $R$ the local ring at an isolated rational singularity on a normal complex surface.  Let $\a \subseteq R$ be an ideal, and $\pi: Y \to X= \spec(R)$ a log resolution of $(X,\a)$ such that $\a\O_Y = \O_Y( - F)$.  Suppose that $\lambda$ is a candidate jumping number for the reduced divisor $G$ with connected exceptional support.
\begin{itemize}
\item
If $G=E$ is prime, then $\lambda$ is (critically) contributed by $E$ to $X$
if and only if
\[
\lceil \K - \lambda F \rceil \cdot E\geq - E\cdot E.\]
\item
If $G$ is reducible, then
$\lambda$ is critically contributed by $G$ if and only if 
\[ \lceil \K - \lambda F \rceil \cdot E = -G\cdot E \]
 for all prime divisors $E$ in the support of $G$.
\end{itemize}
\end{theorem}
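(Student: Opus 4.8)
The plan is to work entirely on the tree of rational curves $G$ using the cohomological criterion from the preceding Proposition, translating the vanishing/non-vanishing of $H^0$ into inequalities on intersection numbers. Write $L = \lceil \K - \lambda F \rceil + G$, and recall the standing facts: each $E_i$ is a smooth rational curve, $G$ is a nodal tree, and a section of $\O_G(L|_G)$ is the same as a tuple of sections $s_i \in H^0(E_i, \O_{E_i}(L|_{E_i}))$ agreeing at the nodes. On $E_i \cong \P^1$, $H^0$ is nonzero exactly when $\deg(L|_{E_i}) = L \cdot E_i = \lceil \K - \lambda F \rceil \cdot E_i + G \cdot E_i \geq 0$, i.e.\ $\lceil \K - \lambda F \rceil \cdot E_i \geq -G \cdot E_i$; and the section, when it exists, is unique up to scalar precisely when the degree is exactly $0$, and is otherwise free to be prescribed a nonzero value at any single chosen point while still vanishing where we like.

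For the prime case $G = E$, the Proposition says $\lambda$ is contributed iff $H^0(E, \O_E(L|_E)) \neq 0$, which is $\deg(L|_E) = \lceil \K - \lambda F\rceil \cdot E + E \cdot E \geq 0$, i.e.\ $\lceil \K - \lambda F \rceil \cdot E \geq - E \cdot E$; criticality is automatic since $E$ has no proper nonzero subdivisors. For the reducible case I would argue as follows. First, suppose critical contribution holds. Then $H^0(G, \O_G(L|_G)) \neq 0$, so some global section $s$ exists; let $E_i$ be an end (leaf) of the tree. Restricting to the subtree $G' = G - E_i$, the section $s$ restricts to a section of $\O_{G'}((\lceil \K - \lambda F\rceil + G')|_{G'}) = \O_{G'}((L - E_i)|_{G'})$, and criticality forces this restriction to be zero. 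Propagating inward — peeling leaves one at a time and using at each stage that the restriction to the remaining subtree must vanish — shows $s$ vanishes on every $E_j$ except possibly at the nodes, and in particular $s|_{E_i}$ is a section of $\O_{E_i}(L|_{E_i})$ vanishing at the (at most) node(s) of $E_i$ in $G$ but not identically zero; since a leaf meets $G - E_i$ in exactly one point, this forces $\deg(L|_{E_i}) \geq 1$... but we also need the reverse, so more care is needed. The cleaner route: criticality means that for the divisor $G' = G - E_j$ we have $H^0(G', (\lceil \K - \lambda F\rceil + G')|_{G'}) = 0$ for every $j$, equivalently (after further peeling, or by induction on $k$ applied to each connected component of $G'$) that $\deg((\lceil \K - \lambda F\rceil + G')|_{E_i}) < 0$ for appropriate $i$ in each component; comparing $\deg(L|_{E_i})$ and $\deg((L - E_j)|_{E_i})$, which differ by $E_i \cdot E_j \in \{0, 1\}$, pins $\deg(L|_{E_i})$ down, and running this over all $j$ adjacent to each $E_i$ forces $\deg(L|_{E_i}) = 0$ for all $i$, i.e.\ $\lceil \K - \lambda F \rceil \cdot E_i = - G \cdot E_i$. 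Conversely, if $\deg(L|_{E_i}) = 0$ for all $i$, then on each $E_i$ there is a unique (up to scalar) nonvanishing section; one checks these can be scaled consistently across the nodes of the tree (possible precisely because $G$ is a tree, so there are no cycle obstructions — glue outward from a root), giving a nonzero $s \in H^0(G, \O_G(L|_G))$, so $\lambda$ is contributed; and for any $0 \leq G' < G$, some $E_i$ has a neighbor in $G \setminus G'$ inside its connected component... actually one shows $\deg((\lceil \K - \lambda F\rceil + G')|_{E_i}) = \deg(L|_{E_i}) - (G - G')\cdot E_i = -(G-G')\cdot E_i < 0$ for a suitable end $E_i$ of that component of $G'$, killing $H^0$ of that component and hence, by the direct-sum decomposition over connected components, killing $H^0(G', \cdot)$. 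Thus the contribution is critical.

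The main obstacle, and the step I expect to require the most care, is the gluing/consistency argument in the converse direction and the matching peeling argument in the forward direction: making precise that on a nodal \emph{tree} of $\P^1$'s the degree-zero line bundle $\O_G(L|_G)$ has a section that is nonvanishing at every node (so the local gluing conditions at nodes are all satisfiable simultaneously), and dually that dropping any component strictly lowers the relevant degree on some end of the affected subtree. Both rest on the combinatorics of the tree — absence of loops among exceptional divisors, guaranteed here by rationality of the singularity — and on the elementary $\P^1$ fact relating $H^0 \neq 0$ and (non)vanishing at prescribed points to the degree. The rest is bookkeeping with $\lceil \K - \lambda F\rceil \cdot E_i + G \cdot E_i$ and the observation that $E_i \cdot E_j \in \{0,1\}$ for $i \neq j$.
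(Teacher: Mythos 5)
Your overall framework --- the cohomological criterion from the preceding Proposition, the description of a section on a nodal tree of $\P^1$'s as a tuple of sections agreeing at the nodes, the prime case, and the sufficiency direction for reducible $G$ (all degrees zero implies critical contribution, via gluing constants outward from a root and then observing that on any $0\le G'<G$ the section is forced to vanish on components meeting $G-G'$ and to be constant elsewhere, so connectedness kills it) --- is the same as the paper's. The genuine gap is in the necessity direction for reducible $G$. Your first attempt does not work for the reason you sense: the restriction of a section of $(\lceil \K-\lambda F\rceil+G)|_G$ to $G'=G-E_i$ is a section of $(\lceil \K-\lambda F\rceil+G)|_{G'}$, not of $(\lceil \K-\lambda F\rceil+G')|_{G'}$, so criticality says nothing directly about it. The ``cleaner route'' is an unproved assertion rather than an argument. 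Writing $L=\lceil \K-\lambda F\rceil+G$, what you extract from $H^0(G-E_j,(L-E_j)|_{G-E_j})=0$ is only that \emph{some} component $E_i$ of each connected piece of $G-E_j$ satisfies $\deg(L|_{E_i})-E_i\cdot E_j<0$, with no control over which $E_i$; if that $E_i$ is not adjacent to $E_j$ this reads $\deg(L|_{E_i})<0$ for an unspecified $i$, which is the opposite of the desired conclusion and can only be excluded by invoking $H^0(G,L|_G)\neq 0$ in an essential way. You also never establish the lower bound $\deg(L|_{E_i})\ge 0$ for every $i$: nonvanishing of $H^0(G,L|_G)$ only gives nonnegative degree on those components where some chosen section is not identically zero.

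The paper closes exactly this gap with a construction you would need to supply. Take a nonzero section $s$ and a component $E$ with $s|_E\neq 0$, and suppose for contradiction $\deg(L|_E)>0$. Decompose $G-E=B_1+\cdots+B_r$ into connected components, meeting $E$ at points $p_1,\dots,p_r$ (each $B_i\cdot E=1$ since $G$ is a tree). Choose a zero $q$ of $s|_E$ with $q\notin\{p_2,\dots,p_r\}$, strip the corresponding linear factor from $s|_E$ to obtain a section of the bundle of degree one lower --- which is precisely $(\lceil \K-\lambda F\rceil+G')|_E$ for $G'=G-B_1$ --- and reglue with suitably rescaled $s|_{B_i}$ for $i\ge 2$. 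This produces a nonzero section over $G'$, contradicting criticality; hence $\deg(L|_E)=0$. Then $s|_E$ is nowhere vanishing, so $s$ is nonzero on every neighbor of $E$, and the argument propagates through the connected tree to give $\deg(L|_{E_i})=0$ for all $i$. Without this (or an equivalent) argument, the forward implication in the reducible case does not go through.
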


\begin{proof}
Suppose first $G = E$ is a single prime exceptional divisor.  Then $\lambda$ is contributed by $E$ if and only if $H^0( E, \left. (\lceil \K - \lambda F \rceil + E) \right|_E) \neq 0$.  Since $E \cong \P^1$, it is equivalent that this line bundle have non-negative degree, \mbox{i.e.} $\lceil \K - \lambda F \rceil \cdot E\geq - E\cdot E$.

\smallskip

Thus, we assume $G = E_1 + \cdots  + E_k$ is reducible.
Note that the numerical conditions given are clearly sufficient.  They are equivalent to saying 
$\O_G(\left.(\lceil K_\pi - \lambda F \rceil  + G)\right|_G)$ restricts to the trivial bundle 
on each of $E_1, \ldots, E_k$,  and hence must be the trivial bundle\begin{footnote}{This is true whenever $G$ is effective with exceptional support.
Theorem 1.7 of \cite{Artin} concludes that the isomorphism class of a line bundle on $G$ is determined by the degrees of its restrictions to $E_1, \ldots, E_k$.}\end{footnote} on $G$.  In particular, 
$H^0(G,\left.(\lceil K_\pi - \lambda F \rceil  + G)\right|_{G}) \neq 0$, and
$G$ contributes $\lambda$ to $(X,\a)$.  To see this contribution is critical, note that if $0 \leq G' < G$, then
the degree of $\left. \O_{G'}(\left.(\lceil K_\pi - \lambda F \rceil  + {G'})\right|_{G'})\right|_{E_i}$ along $E_i$ is
$- E_i \cdot (G-G')$.  In particular, the sections of $\O_{G'}(\left.(\lceil K_\pi - \lambda F \rceil  + {G'})\right|_{G'})$ are identically zero when restricted to  to any component $E_i$ of $G'$ which intersects $G - G'$, and are constant along any other component of $G'$.
Since $G$ was connected, 
one sees any global section must be identically zero. 

\smallskip

Now, assume $G$ critically contributes $\lambda$ to $(X,\a)$, and let $s \in H^0(G,\left.(\lceil K_\pi - \lambda F \rceil  + G)\right|_{G})$ be nonzero.  There is some $E$ in $\{ E_1, \ldots, E_k \}$ such that
$s|_E$ is nonzero.  In particular, we see that the restriction of $\O_G(\left.(\lceil K_\pi - \lambda F \rceil  + G)\right|_G)$ to $E$ has non-negative degree. 
 Suppose, by way of contradiction,
its degree is strictly positive.
Partition $G - E$ into its connected components, i.e. write $G - E = B_1+ \cdots + B_r$ where each $B_i$ for $1 \leq i \leq r$ is  the sum of all of the prime divisors in some connected component of $G - E$.  
Since $G$ is a nodal tree, we have that 
$0 < B_i \leq G - E$ and $B_i \cdot E = 1$  for each $i = 1, \ldots, r$. Furthermore, observe  that the supports of $B_1, \ldots, B_r$ are pairwise disjoint.
 Let $p_1, \ldots, p_r$ be the intersection points of $B_1, \ldots, B_r$ with $E$, respectively.
Re-indexing if necessary, choose a point $q \in E \setminus \{p_2, \ldots, 
  p_r \}$ such that $s(q) = 0$.
  \smallskip
  
We will show that $G' = G - B_1$ contributes $\lambda$ by proving $H^0(G',\O_{G'}(\left.(\lceil K_\pi - \lambda F \rceil  + G')\right|_{G'}) \neq 0$.  For $i \neq 1$,  we have $\left. (\lceil K_\pi - \lambda F \rceil  + G')\right|_{B_i} = \left.(\lceil K_\pi - \lambda F \rceil  + G)\right|_{B_i}$ since the supports of $B_1$ and $B_i$ are disjoint.  In particular, we may consider $s|_{B_i}$ as a global section of $\O_{B_i}(\left.(\lceil K_\pi -  \lambda F \rceil  + G')\right|_{B_i})$.  Next, identify $s|_E$ with a nonzero homogeneous polynomial on $\P^1$ of strictly positive degree.  Since $\deg(\O_E\left.(\lceil K_\pi - \lambda F \rceil  + G')\right|_{E}) =   \deg(\O_E\left.(\lceil K_\pi - \lambda F \rceil  + G)\right|_{E}) -1$,
removing one of its linear factors corresponding to a zero at $q$ yields a nonzero global section $t$ of $\O_{E}(\left.(\lceil K_\pi - \lambda F \rceil  + G')\right|_{E})$. By construction, $t(p_i) \neq 0$ if and only if $s(p_i) \neq 0$ for $2 \leq i \leq r$.  After scaling each $s|_{B_i}$ to agree with $t$ at $p_i$, we may glue to obtain a nonzero global section of $\O_{G'}(\left.(\lceil K_\pi - \lambda F \rceil  + G')\right|_{G'})$.  But this is absurd, as it implies that $G'$ contributes $\lambda$ to the pair $(X,\a)$.  Hence, we must have that $
 \deg(\O_E\left.(\lceil K_\pi - \lambda F \rceil  + G)\right|_{E}) = 0$.  Furthermore, nonzero global sections of $\O_E\left.(\lceil K_\pi - \lambda F \rceil  + G)\right|_{E}$ never vanish.  As $s$ does not restrict to zero along any component of $G$ which intersects $E$, the same arguments apply.  Using that $G$ is connected, the theorem now follows.
 \end{proof}
 
 \begin{example}
 Suppose $R$ is the local ring at the origin in $\A^2$, and $C$ is the germ of the plane curve defined by the polynomial  $(y-x^2)(y^2-x^5) = 0$.  The minimal log resolution $\pi$ is a sequence of four blow-ups along closed points (there is a unique singular point on the transform of $C$ for the first two blow-ups, after which it takes an additional two blowups to ensure normal crossings), and is pictured below.  If $E_1, \ldots, E_4$ are the exceptional divisors created, one checks
 \[
\pi^*C =C+  3E_1 + 6E_2 + 7E_3 + 14E_4
 \qquad
 \K = E_1 +2E_2 + 3E_3 + 6E_4
 \]
 
 \smallskip
 
 \begin{center}
 \raisebox{.2in}{
 \includegraphics[width=1.5in]{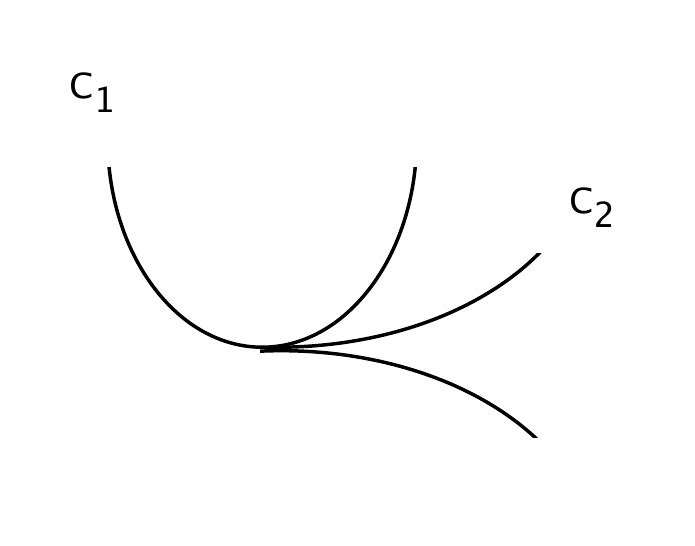}
 \hspace{.3in}
 \raisebox{.55in}{$\arrow{w}$} \hspace{-.1in}}
  \includegraphics[width=1.5in]{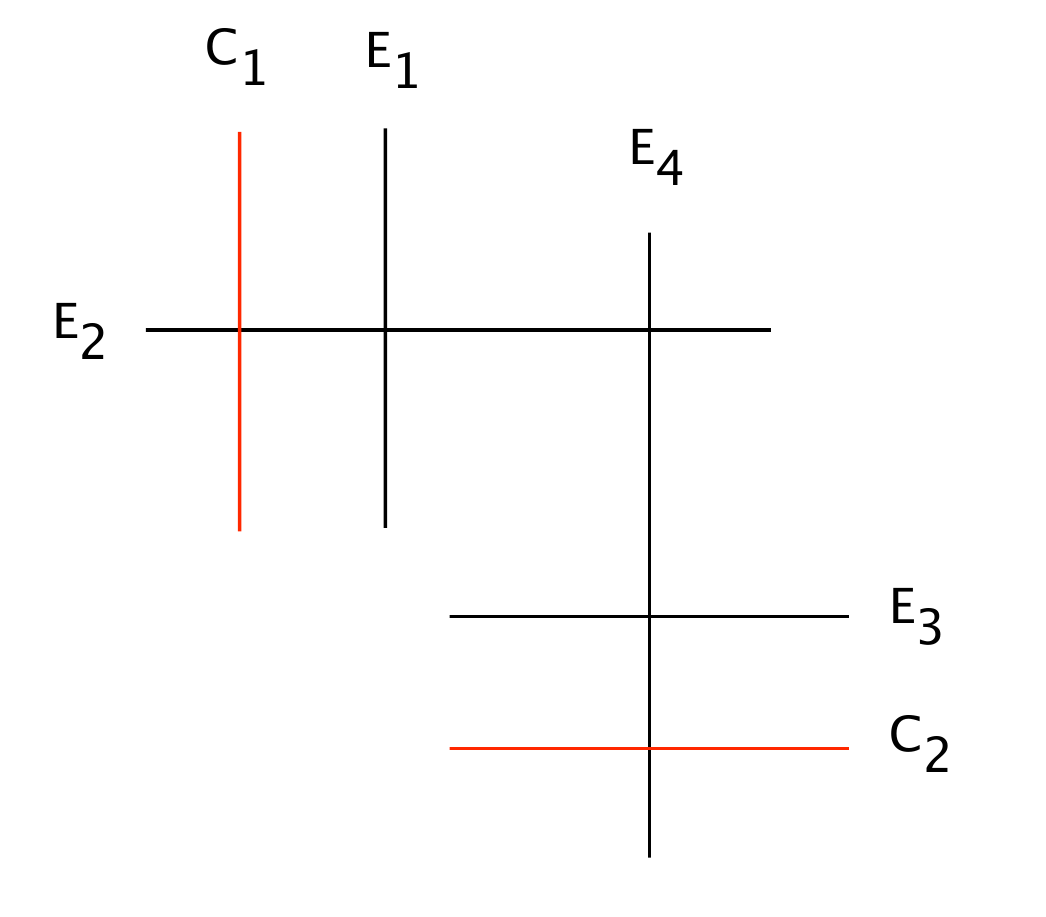}
 \end{center}
 
 \smallskip
 
 \noindent
 The only candidate jumping number less than one shared by both $E_2$ and $E_4$ is $\frac{1}{2}$.  One now computes directly that
 $ \lceil \K - \frac{1}{2} \pi^*C \rceil = -E_2 -E_4$,
and Theorem \ref{lemma} now implies that $E_2 + E_4$ critically contributes the jumping number $\frac{1}{2}$.  In Section \ref{alg}, we will discuss how the numerical criteria in Theorem \ref{lemma} give an algorithm for numerically computing all of the jumping numbers in such examples.  However, we postpone further discussion until after we have  examined which collections of exceptional divisors have the potential to critically contribute jumping numbers.
\end{example}

\section{Geometry of Contributing Collections}
\label{main}

We first recall some of the theory of complete ideals relevant to our calculations. If $R$ is the local ring at an isolated rational singularity of a normal complex surface, and $\a$ is an ideal of $R$, one can define its integral closure $\overline{\a}$ as follows.  Let $\nu : W \to X = \spec(R)$ be the normalized blow-up of $X$ along $\a$; then $\a \O_W = \O_W(-D)$ is a locally principal ideal sheaf on $W$ cutting out an effective divisor $D$.  One then sets $\overline{\a} = \nu_*\O_W(-D)$.  The valuations corresponding to prime divisors in the support of $D$ are said to be the \emph{Rees valuations} of the ideal $\a$.  When $\a = \overline{\a}$, we say that $\a$ is integrally closed or, more classically, \emph{complete}.

\smallskip

Since any log resolution $\pi: Y \to X$ of $\a$ factors through the normalized blow-up, we always have that  $\overline{\a} = \pi_*\O_Y(-F)$ where $\a \O_Y = \O_Y(-F)$.  In addition, we may characterize the exceptional Rees valuations of $\a$ numerically as follows.  Since $\a \O_Y = \O_Y(-F)$ is globally  generated, so is $\O_E( - F|_E)$ for any prime exceptional divisor $E$. In particular, we have $F \cdot E \leq 0 $.  Lemma  21.2 of \cite{LipRat} shows that $F \cdot E < 0$ if and only if $E$ corresponds to a Rees valuation of $\a$.

\smallskip

In \cite{How}, it was shown that a prime exceptional divisor on the minimal resolution of a curve on a smooth surface contributes a jumping number if and only if it intersects at least three other components of the support of the pull-back of the curve. The following theorem gives analogous restrictions to critically contributing collections in our setting.

\begin{theorem}
\label{cor}
Suppose $R$ is the local ring at an isolated rational singularity of a normal complex surface.  Let $\a \subseteq R$ be an ideal, and $\pi: Y \to X= \spec(R)$ a log resolution of $(X,\a)$ such that $\a\O_Y = \O_Y( - F)$. 
If  the reduced  divisor  $G$ with exceptional support
critically contributes the jumping number $\lambda$ to the pair $(X,\a)$, then $G$ is a connected chain.
The ends $E$ of $G$ must either:
  \begin{itemize}
 \item
 intersect at least three other prime divisors in the support of either $F$ or $\K$, or;
 \item
correspond to a Rees valuation of $\a$.
\end{itemize}
Furthermore, the non-ends of $G$ can intersect only those components of the support of $F$ 
that also have $\lambda$ as a candidate jumping number, and never correspond to a Rees valuation of $\a$.
\end{theorem}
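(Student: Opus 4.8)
The plan is to extract both conclusions from the sharp numerical identity of Theorem~\ref{lemma} together with adjunction on $Y$. We may assume $G$ is a connected chain, this being the first assertion of the theorem. Let $E$ be a non-end of $G$; then $G$ has at least three components, so it is reducible and the reducible case of Theorem~\ref{lemma} applies, giving $\lceil \K - \lambda F\rceil\cdot E = -\,G\cdot E$. Since $E$ is an interior vertex of the chain $G$ it meets exactly two other components of $G$, each transversally in one point, so $(G-E)\cdot E = 2$, whence $G\cdot E = E^2 + 2$ and therefore
\[
\lceil \K - \lambda F\rceil\cdot E = -(E^2+2).
\]

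The next step is to rewrite the round-up. Componentwise $\lceil x\rceil = x + \{-x\}$, so $\lceil \K - \lambda F\rceil = (\K - \lambda F) + \{\lambda F - \K\}$, where $\{\lambda F - \K\}$ is an effective $\Q$-divisor whose coefficient along a prime divisor $P$ is the fractional part of $\ord_P(\lambda F - \K)$; in particular its support is precisely the set of prime divisors along which $\lambda F - \K$ is non-integral. Because $\lambda$ is a candidate jumping number for $G$, hence for each component of $G$, we have $\ord_E(\lambda F - \K)\in\Z$, so $E$ does not lie in the support of $\{\lambda F - \K\}$. We also record that $\K\cdot E = -2 - E^2$: as $E\cong\P^1$, adjunction gives $(K_Y+E)\cdot E = -2$, and since $K_Y = \pi^*K_X + \K$ with $\pi^*K_X\cdot E = 0$ ($E$ being $\pi$-exceptional), this yields $\K\cdot E = K_Y\cdot E = -2-E^2$. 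Substituting everything into the displayed equality,
\[
-(E^2+2) = (\K-\lambda F)\cdot E + \{\lambda F - \K\}\cdot E = (-2-E^2) - \lambda\,(F\cdot E) + \{\lambda F - \K\}\cdot E,
\]
which collapses to $\{\lambda F - \K\}\cdot E = \lambda\,(F\cdot E)$.

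Both sides of this identity are now forced to vanish. The right side is $\le 0$ because $\O_Y(-F) = \a\O_Y$ is globally generated, so $F\cdot E\le 0$ as recalled in Section~\ref{main}, while $\lambda > 0$; the left side is $\ge 0$ because $\{\lambda F - \K\}$ is effective and does not contain $E$ in its support. Hence $F\cdot E = 0$, so the valuation $\ord_E$ is not a Rees valuation of $\a$ by Lemma~21.2 of \cite{LipRat}; and $\{\lambda F - \K\}\cdot E = 0$. For the last claim, suppose a component $P$ of the support of $F$ met $E$ but did not have $\lambda$ as a candidate jumping number. Then $\ord_P(\lambda F - \K)\notin\Z$, so $P$ occurs in $\{\lambda F - \K\}$ with strictly positive coefficient; since $P\neq E$ and every coefficient of $\{\lambda F - \K\}$ is non-negative, $\{\lambda F - \K\}\cdot E \ge (\text{coefficient of }P)\cdot(P\cdot E) > 0$, contradicting the vanishing just obtained. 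Thus every component of the support of $F$ meeting a non-end of $G$ has $\lambda$ as a candidate jumping number, and no non-end is a Rees valuation.

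The computations are all short, and I expect no serious obstacle; the one point needing care is the passage through the round-up --- namely that $\{\lambda F - \K\}$ is effective, that its support is exactly the non-integrality locus of $\lambda F - \K$, and that $E$ avoids that support, which is the only place the hypothesis ``$\lambda$ is a candidate jumping number for all of $G$'' is used. A small bookkeeping point worth stating is that non-ends exist only once $G$ has at least three components, so that it is the equality (and not the inequality) in Theorem~\ref{lemma} that is in force.
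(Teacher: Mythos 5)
Your computation for the non-ends is correct and is exactly the paper's argument for that clause: rewrite $\lceil \K-\lambda F\rceil$ as $(\K-\lambda F)+\{\lambda F-\K\}$, use adjunction $\K\cdot E=-2-E^2$ and the equality of Theorem~\ref{lemma} to arrive at $\{\lambda F-\K\}\cdot E-\lambda F\cdot E=0$, and then force both terms to vanish using $F\cdot E\le 0$ and the effectivity of $\{\lambda F-\K\}$ together with the fact that $E$ is absent from its support. That part is sound.

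The problem is that you have only proved the final (``Furthermore'') clause of the theorem. You open by saying ``we may assume $G$ is a connected chain, this being the first assertion of the theorem,'' but that first assertion is precisely one of the things to be proved: the Corollary in Section~\ref{NumCrit} gives connectedness, but nothing established earlier gives that the dual graph of $G$ is a path rather than a general tree. The chain structure falls out of the very identity you derive: for any component $E$ of a reducible $G$, the equality of Theorem~\ref{lemma} gives $\{\lambda F-\K\}\cdot E-\lambda F\cdot E=2-(G-E)\cdot E$, and since the left side is non-negative one gets $(G-E)\cdot E\le 2$, i.e.\ no component meets more than two others. You also never address the condition on the ends (nor the case where $G$ is prime): when $(G-E)\cdot E=1$ and $E$ is not a Rees valuation, the same identity yields $\{\lambda F-\K\}\cdot E=1$, and since every coefficient of $\{\lambda F-\K\}$ is strictly less than $1$ and the intersections are transverse, $E$ must meet at least two components of $F$ or $\K$ not having $\lambda$ as a candidate jumping number, hence at least three other prime divisors in total (counting the adjacent component of $G$); the prime case is the same computation starting from the inequality $\lceil\K-\lambda F\rceil\cdot E\ge -E\cdot E$, which gives $\{\lambda F-\K\}\cdot E\ge 2$. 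These omitted pieces are not routine bookkeeping --- they are the bulk of the theorem --- so as written the proposal is substantially incomplete, even though the method you use would carry them through.
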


\begin{proof}  
We will use the numerical criteria for critical contribution given in Theorem \ref{lemma}.
These are stated in terms of intersections with $\lceil \K - \lambda F \rceil$, which we manipulate into the following form
\[
\lceil \K - \lambda F \rceil = - \lfloor \lambda F - \K \rfloor = \K - \lambda F + \{ \lambda F - \K \}.
\]

Suppose first $G = E$ is a prime exceptional divisor, and $E$ is not a Rees valuation of $\a$.  Then by Theorem \ref{lemma}, since $E$ contributes $\lambda$, we have that $\lceil \K - \lambda F \rceil \cdot E \geq - E \cdot E$.  Plugging in from above and using that $F \cdot E = 0$, we have
\[
\{ \lambda F - \K \} \cdot E \geq  2,
\]
where we have made use of the adjunction formula 
\begin{equation}
\label{jim}
-\deg \left. (\K + E) \right|_E = -\deg K_E=2
\end{equation}
 applied to $E \cong \P^1$.  Since $\lambda$ is necessarily a candidate jumping number for $E$, it does not appear in $\{ \lambda F - \K \}$, which is an effective divisor with coefficients strictly less than one.  As the divisors in $\K$ and $F$ intersect transversely, at least three of them must intersect $E$ in order for the above inequality to hold.

Assume now $G$ is reducible.  Since $\lambda$ is critically contributed by $G$, we have that $G$ is connected and
$\lceil \K - \lambda F \rceil \cdot E = - G \cdot E$ for all $E$ in the support of $G$.  Plugging in and rearranging terms as above gives
\begin{equation}
\label{bob}
\{ \lambda F - \K \} \cdot E -\lambda F \cdot E =  2 - (G-E)\cdot E,
\end{equation}
where we have made use of the adjunction formula \eqref{jim} once more.  Since $F \cdot E \leq 0$ and $\lambda$ is a candidate jumping number for $E$, the left side of equation \eqref{bob} is non-negative.  Hence, we must have that $(G-E)\cdot E \leq 2$.  As $G$ is connected, in fact, $(G-E)\cdot E$ is either 1 or 2, so $G$ is in fact a chain.  If $E$ is an end of $G$ so that $(G-E)\cdot E = 1$ and $E$ does not correspond to a Rees valuation of $\a$, then
\[
\{ \lambda F - \K \} \cdot E = 1.
\]
It follows that $E$ must intersect at least two components of $F$ or $\K$ which do not have $\lambda$ as a candidate jumping number.  As it also intersects a component of $G$, all of which have $\lambda$ as a candidate jumping number, the desired conclusion follows.  On the other hand, if $E$ is not an end of $G$ so that $(G-E)\cdot E = 2$, we have
\[
\{ \lambda F - \K \} \cdot E - \lambda F \cdot E = 0.
\]
Thus, both terms on the left must vanish.  In particular, $F \cdot E = 0$ so $E$ does not correspond to a Rees valuation of $E$, and $E$ can only intersect those components of $F$ which also have $
\lambda$ as a candidate jumping number.
\end{proof}

\begin{remark}
Recently, Schwede and Takagi \cite{Karl} have made use of  \emph{multiplier submodules}\footnote{These objects were also called \emph{adjoint modules} in \cite{HyrySmith03}.} in studying rational singularities of pairs. The multiplier submodules $\J(\omega_X, \a^\lambda) = \pi_* \O_Y(\lceil K_Y-\lambda F \rceil )$ are indexed by the positive rational numbers, and form nested sequence of submodules of the canonical module $\omega_X$.  These behave in a manner similar to multiplier ideals, and one can use them to define the rational threshold and rational jumping numbers of the pair $(X,\a)$.   Since multiplier submodules satisfy the analogue of local vanishing, the same methods used above apply and give similar results for critical contribution of rational jumping numbers.
\end{remark}
 
 \section{Jumping Number Algorithm and Computations}
 \label{alg}

We now describe an algorithm for computing the jumping numbers of $(X,\a)$ from a log resolution $\pi: Y \to X$.  Let $F$  be the effective divisor $F$ on $Y$ such that $\a \O_Y  = \O_Y(-F)$, and $E_1, \ldots, E_r$ the prime divisors appearing in $\K$ or $F$.

\smallskip

\noindent
{\bfseries Step 1.} Compute the coefficients  of the divisors $\K = \sum_{i=1}^r b_i E_i$ and $F = \sum_{i=1}^r a_i E_i$, and use these to find the nontrivial candidate jumping numbers $\{\, \frac{b_i + m}{a_i} \, | \, m  \in \Z_{>0} \, \}$ for each $E_i$ which are at most equal to two.

\smallskip

\noindent
{\bfseries Step 2.} Next, we must determine those $E_i$ which correspond to Rees valuations of $\a$. The $E_i$ which are not exceptional, \mbox{i.e.} the strict transforms of  divisorial components of the subscheme defined by $\a$, always correspond to Rees valuations.  The prime exceptional divisors $E_i$ corresponding to Rees valuations are characterized by the property that $E_i \cdot F < 0$.  Also determine which $E_i$ intersect at least three other $E_j$, for $j \neq i$.

\smallskip

\noindent
{\bfseries Step 3.} For each candidate jumping number $\lambda \leq 2$ appearing in the first step, perform the following series of checks to determine if $\lambda$ is realized as an actual jumping number.
\begin{enumerate}[(i)]
\item 
If $\lambda$ is a candidate jumping number for an $E_i$ which is not exceptional and corresponds to 
a Rees valuation of $\a$, then $\lambda$ is realized as a jumping number contributed by $E_i$.  Proceed to check the next candidate jumping number.  Otherwise, continue to (ii).
\item
Using the necessary geometric conditions from Theorem \ref{cor},
determine all of the connected chains of prime exceptional divisors which may critically contribute $\lambda$.  Specifically, these are the connected exceptional chains $G = E_{i_1} + \cdots E_{i_k}$ such that
$\lambda$ is a candidate jumping number for each $E_{i_j}$, and
the ends of $G$ either correspond to Rees valuations or intersect at least three other $E_i$.
\item
For each chain $G$ from (ii), use the numerical criteria of Theorem \ref{lemma} to determine if $\lambda$ is realized as a jumping number critically contributed by $G$.  Specifically,
\begin{itemize}
\item
If $G=E_{i_1}$ is prime, then $\lambda$ is (critically) contributed by $E_{i_1}$ to $X$
if and only if
\[
\lceil \K - \lambda F \rceil \cdot E_{i_1}\geq - E_{i_1}\cdot E_{i_1}.\]
\item
If $G$ is reducible, then
$\lambda$ is critically contributed by $G$ if and only if 
\[ \lceil \K - \lambda F \rceil \cdot E_{i_j} = -G\cdot E_{i_j} \]
 for each of the prime divisors $E_{i_j}$ in the support of $G$.
\end{itemize}
\item
If we are not in the situation of (i) and $\lambda$ were realized as a jumping number, it would be critically contributed by some collection of exceptional divisors.  Indeed, the sum of all the exceptional divisors in $F$ which share this candidate jumping number would contribute, and a minimal contributing collection would critically contribute.  Thus, if (i) and (iii) have produced only negative answers, we deduce that $\lambda$ cannot be a jumping number.
\end{enumerate}

\smallskip

\noindent
{\bfseries Step 4.} From above, we now know all of the jumping numbers of $(X, \a)$ which are at most two.  To determine the remaining jumping numbers, recall that the jumping numbers are eventually periodic;  $\lambda > 2$ is a jumping number if and only if $\lambda - 1$ is also a jumping number.  This concludes the algorithm for computing the jumping numbers of $(X, \a)$. 

\smallskip

The remainder of this section focuses on a general scenario to which this method applies.  We begin by altering our notation slightly.  Assume $R$ is the local ring at a rational singularity of a complex surface which is not smooth. Let $\pi : Y \to X = \spec(R)$ be the minimal resolution of singularities of $X$, and $\mathfrak{m}$ the maximal ideal of $R$.   Since $\pi$ is a composition of closed point blow-ups, and $X$ is singular, it must begin with a blow-up along this singular point.  Thus, $\pi$ is also a minimal log resolution of $\mathfrak{m}$.  In this case, the effective divisor $Z$ cut out by the principal ideal sheaf $\m \O_Y$ is called the \emph{fundamental cycle} of $X$.

\smallskip

The fundamental cycle of $X$ was first introduced by Artin in \cite{Artin1}, where it was characterized  numerically.  We now recover this description while reproducing a summary from \cite{Lipman} of results found in \cite{LipRat}. Recall that a divisor $D$ on $Y$ is said to be \emph{antinef} if $D \cdot E \leq 0$ for all prime exceptional divisors $E$ on $Y$.  By a fundamental result of Lipman, Theorem 12.1 in \cite{LipRat}, a divisor $D$ on $Y$ is antinef if and only if $\O_Y(-D)$ is globally generated.  In particular, an antinef divisor is effective.  It follows immediately that there is a bijective correspondence between complete ideals $I \subseteq R$ such that $I\O_Y$ is invertible, and antinef divisors $D$ on $Y$.  Given a complete ideal $I \subseteq R$, the principal ideal sheaf $I\O_Y$ cuts out an antinef divisor $D$.  In other words, we have that $I\O_Y = \O_Y(-D)$ where $D$ is antinef.
Conversely, if $D$ on $Y$ is antinef, then $\pi_* \O_Y(-D) = H^0(Y, \O_Y(-D))$ is a complete ideal of $R$.  This correspondence is inclusion reversing, \mbox{i.e.} larger antinef divisors correspond to smaller ideals, and $\m$-primary or \emph{finite colength} ideals correspond to exceptionally supported antinef divisors.  Since $\m$ is the largest finite colength ideal of $R$, $Z$ is the unique smallest exceptionally supported antinef divisor on $Y$.  In \cite{Artin1}, it is shown that $-Z\cdot Z$ is the multiplicity of $R$, and $- Z \cdot Z + 1$ is its embedding dimension.  To compute $Z$, one may proceed as follows.  Start with the reduced sum of all of the prime exceptional divisors on $Y$.  Add an additional prime exceptional divisor $E$ only if the intersection of $E$ with this sum is positive, and repeat this process with the new sum of exceptional divisors.  After finitely many iterations of this procedure, the corresponding sum will be antinef and must necessarily be equal to $Z$.

\smallskip

Once $Z$ has been found, in order to compute the jumping numbers of $\m$, we first need the relative canonical divisor $\K$.  Recall\footnote{Artin \cite{Artin1} attributes this fact to Mumford \cite{Mumford}, while Lipman \cite{LipRat} gives credit to Du Val.} that the restriction of the intersection product to the exceptional locus is negative definite.  Thus, to compute $\K$, it suffices to specify its intersection with any prime exceptional divisor $E$.  Since $E \cong \P^1$, the adjunction formula once more gives $\K \cdot E = -2 - E \cdot E$.  Using the algorithm for finding jumping numbers described above, this shows how to compute the jumping numbers of $\m$ starting from intersection matrix of the prime exceptional divisors on $Y$.

\begin{example}[Du Val Singularities]  
\label{Du Val}
In Table 1, we give the results of applying the above techniques to the various types of \emph{Du Val} singularities.  In this case, the relative canonical divisor of the minimal resolution is zero, and all of the prime exceptional divisors have self-intersection $-2$.  The dual graph corresponding to the exceptional locus is given by one of the Dynkin diagrams of type $A$, $D$, or $E$.  See Section 4.3 of \cite{Shafarevich88} for a full description. Recall that $\lambda > 2$ is a jumping number if and only if $\lambda - 1$ is also a jumping number.
\end{example}

{\renewcommand\arraystretch{2}
\begin{table}
\label{table}
\caption{Jumping Numbers  in  Du Val Singularities}
\begin{tabular}{c@{\hspace{.4in}}cc}
Type & Dual Graph & Jumping Numbers of the Maximal Ideal  \\ \hline
\raisebox{.3in}{$A_n$ ($n \geq 1$)} & 
 \includegraphics[width=2.2in]{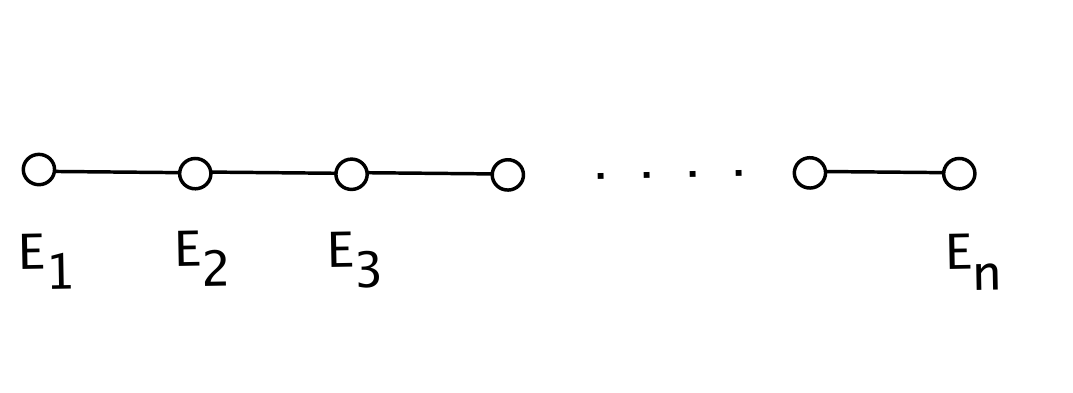}  & 
\raisebox{.3in}{$
\{ 1, 2, \ldots \}$}\\
\hline
\raisebox{.3in}{$D_n$ ($n \geq 4$)} &  \raisebox{1in}{} \includegraphics[width=2.2in]{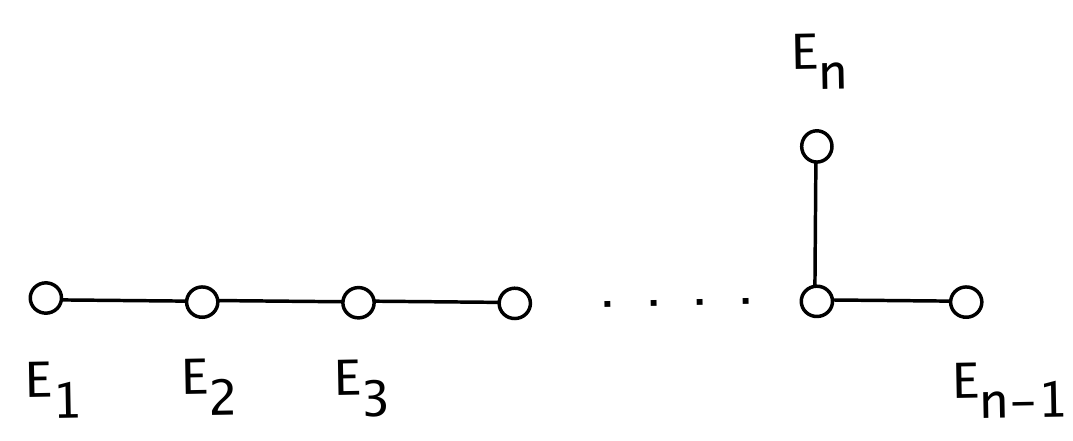} & \raisebox{.3in}{$\{ \frac{1}{2}, \frac{3}{2}, 2,  \ldots \}$}\\
\hline \smallskip
\raisebox{.3in}{$E_6$} &  \includegraphics[width=2.2in]{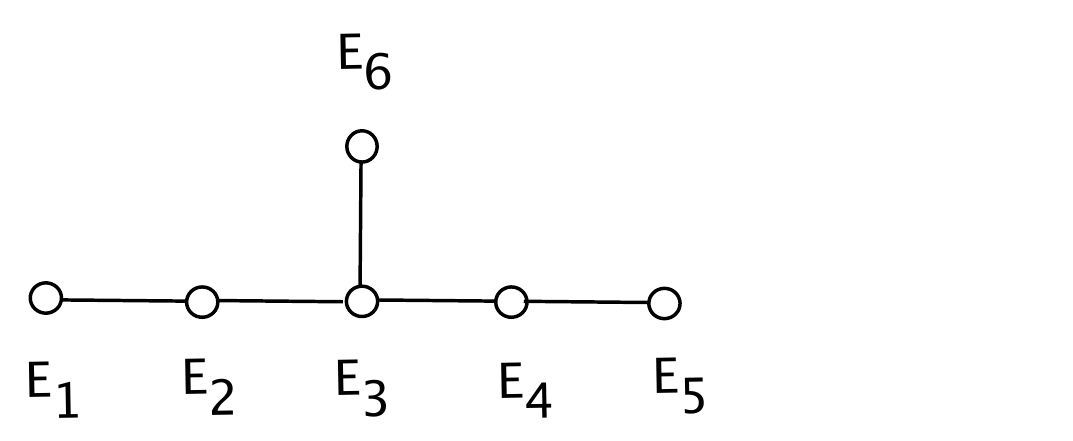} & \raisebox{.3in}{$\{ \frac{1}{3}, \frac{4}{3}, \frac{3}{2}, 2, \ldots \}$}\\
\hline \smallskip
\raisebox{.3in}{$E_7$} &  \includegraphics[width=2.2in]{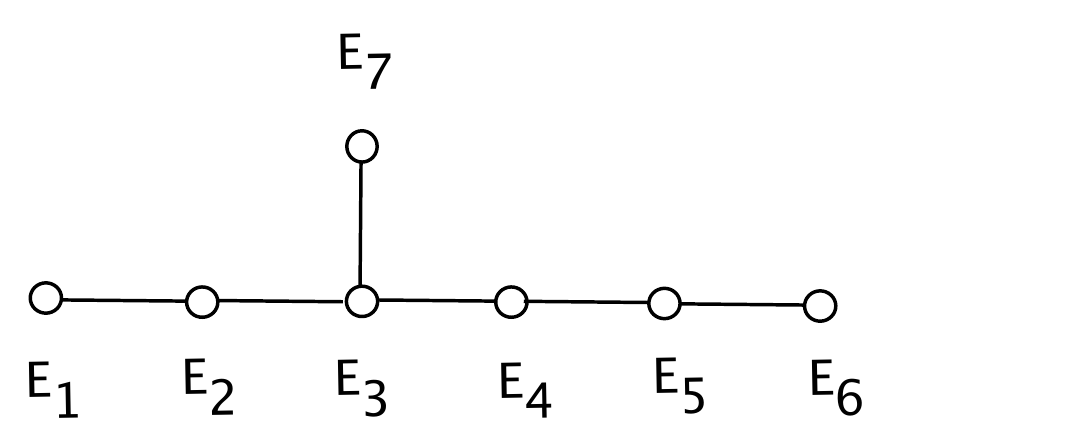} & \raisebox{.3in}{$\{ \frac{1}{4}, \frac{5}{4}, \frac{3}{2}, 2,  \ldots \}$}\\
\hline \smallskip
\raisebox{.3in}{$E_8$} &  \includegraphics[width=2.2in]{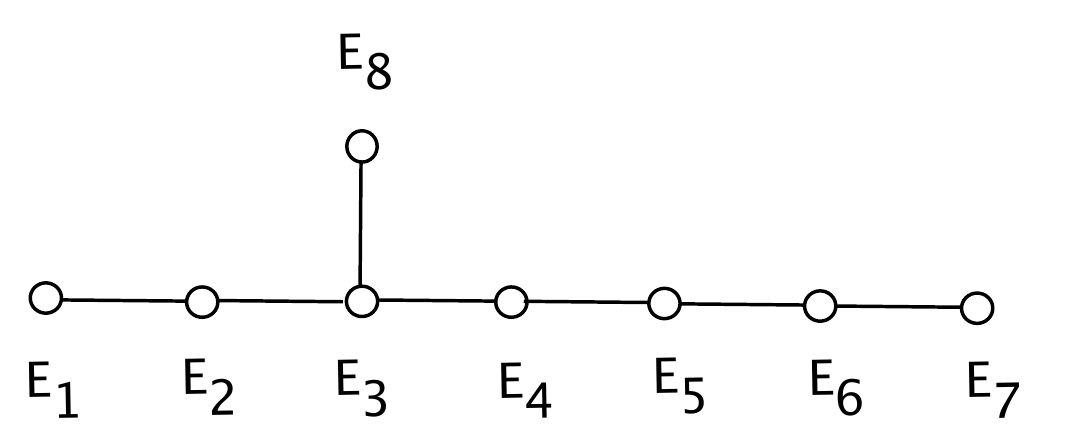} & \raisebox{.3in}{$\{ \frac{1}{6}, \frac{7}{6}, \frac{3}{2}, 2,  \ldots \}$}\\
\hline \smallskip
\end{tabular}
\end{table}}

\begin{tabular}{c@{\quad}l}
\raisebox{5ex}{\parbox{.85in}{$A_n$ ($n \geq 1$)}} & \parbox{.8\textwidth}{
The fundamental cycle is $Z = E_1 + \cdots + E_n$, and both $E_1$ and $E_n$ are Rees valuations of the maximal ideal.  The log canonical threshold $1$ is critically contributed by $E_1 + \cdots + E_n$, while all of the other jumping numbers are contributed by either $E_1$ or $E_n$.} \bigskip \\ 
\end{tabular}

\begin{tabular}{c@{\quad}l}
\raisebox{5ex}{\parbox{.85in}{$D_n$ ($n \geq 4$)}}&\parbox{.8\textwidth}{
The fundamental cycle is $Z = E_1 + E_n + E_{n-1} + 2E_2 + \cdots + 2E_{n-2}$, and $E_2$ is the only Rees valuation of the maximal ideal.  The log canonical threshold $\frac{1}{2}$ is critically contributed by $E_2 + \cdots + E_{n-2}$, while all  other jumping numbers are contributed by $E_2$.} \bigskip \\
\end{tabular}

\begin{tabular}{c@{\quad}l}
\raisebox{5ex}{\parbox{.85in}{$E_6$ \hspace{.2in}}}&\parbox{.8\textwidth}{
The fundamental cycle is $Z = E_1 + 2E_2 + 3E_3 + 2 E_4 + E_5 + 2 E_6$, and $E_6$ is the only Rees valuation of the maximal ideal.  The jumping numbers $\{ \frac{1}{3} + \Z_{\geq 0}\}$ are contributed by $E_3$,  while all  other jumping numbers $\{ \frac{3}{2} + \frac{1}{2}\Z_{\geq 0} \}$ are contributed by $E_6$.} \bigskip \\
\end{tabular}

\begin{tabular}{c@{\quad}l}
\raisebox{5ex}{\parbox{.85in}{$E_7$ \hspace{.2in}}}&\parbox{.8\textwidth}{
The fundamental cycle is 
$Z = 2 E_1 + 3E_2 + 4E_3 + 3 E_4 + 2 E_5 +  E_6 + 2E_7$, and $E_1$ is the only Rees valuation of the maximal ideal.  The jumping numbers $\{ \frac{1}{4} + \Z_{\geq 0}\}$ are contributed by $E_3$,  while all  other jumping numbers $\{ \frac{3}{2} + \frac{1}{2}\Z_{\geq 0} \}$ are contributed by $E_1$.} \bigskip \\
\end{tabular}

\begin{tabular}{c@{\quad}l}
\raisebox{5ex}{\parbox{.85in}{$E_8$ \hspace{.2in}}}&\parbox{.8\textwidth}{
The fundamental cycle is $Z = 2E_1 + 4E_2 + 6E_3 + 5 E_4 + 4 E_5 + 3 E_6 + 2E_7 + 3E_8$, and $E_7$ is the only Rees valuation of the maximal ideal.  The jumping numbers $\{ \frac{1}{6} + \Z_{\geq 0}\}$ are contributed by $E_3$,  while all  other jumping numbers $\{ \frac{3}{2} + \frac{1}{2}\Z_{\geq 0} \}$ are contributed by $E_7$.} \\
\end{tabular}

\begin{example}[Cyclic Quotient Surface Singularities]
\label{Toric}
Consider the action of the cyclic group of order $n$ on $\A^2=\spec \C[x,y]$ given by
\[
x \mapsto \zeta_n x \qquad y \mapsto (\zeta_n)^k y
\]
where $\zeta_n$ is a primitive $n$-th root of unity, and $n>k$ are relatively prime positive integers.
The quotient is a toric surface with a rational singularity.  See \cite{Fulton}, Section 2.6, for a complete discription.  Let $R$ be the local ring at the singular point, and set $X = \spec(R)$.  Consider the Hirzebruch-Jung continued fraction
\[
\frac{n}{k} = a_1 - \frac{1}{a_2 - \frac{1}{\ldots - \frac{1}{a_m}}}
\]
of $\frac{n}{k}$, with integers $a_1, \ldots, a_m \geq 2$.  The exceptional set of the minimal resolution $\pi:Y \mapsto X$ is a chain of $m$ rational curves
\begin{center}
\includegraphics[width=3in]{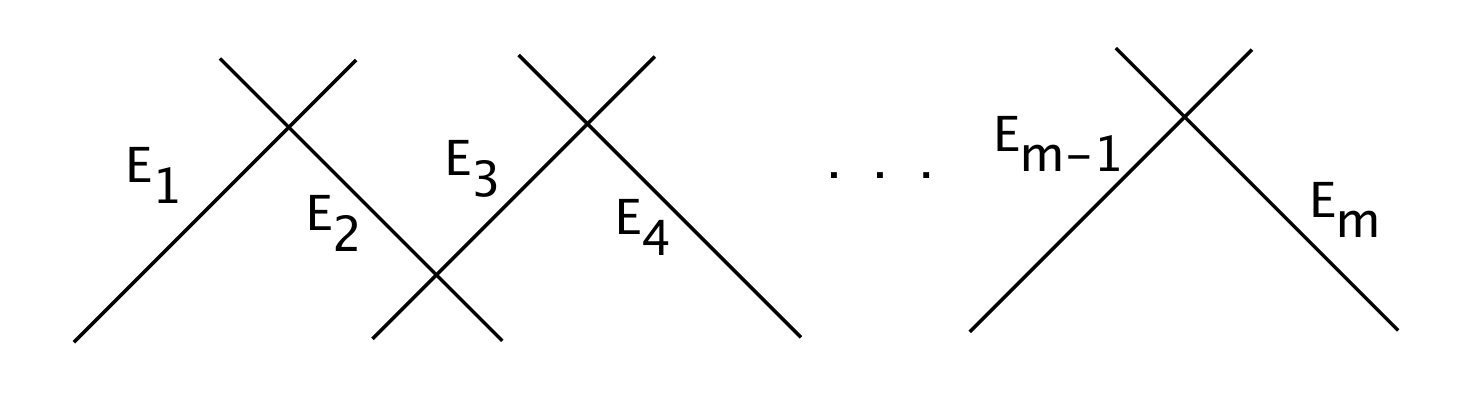}
\end{center}
where $E_i \cdot E_i = -a_i$ for $i = 1, \ldots, m$.  The fundamental cycle is $Z = E_1 + \cdots + E_m$.  To find the candidate jumping numbers of $E_i$,  set $j_0 = 1$ and $j_1 = \frac{k+1}{n}$.  Define $j_2, \ldots, j_m$ recursively by
\[
j_{i+1} = a_i j_i - j_{i-1}.
\]
One can check the nontrivial candidate jumping numbers of $E_i$ are precisely $\{ j_i + \Z_{\geq 0} \}$.  Using that each $a_i \geq 2$ and the recursive definition, it is easy to see there is some $1 \leq k_1 \leq k_2 \leq m$ such that we have the inequalities
\[
j_1 > j_2 > \cdots > j_{k_1} \qquad j_{k_1} = j_{k_1 + 1} = \cdots = j_{k_2} \qquad j_{k_2}< j_{k_2+1} < \cdots < j_m
\]
and $j_1 , j_m \leq 1$.  These relationships allow one to progressively check the numerical conditions given in Theorem \ref{lemma}, and we find the jumping numbers of the maximal ideal are precisely
\[
\min \{j_1, \ldots, j_m \} \cup \left( \bigcup_{i \in \mathscr{R} }
\{j_i + \Z_{>0} \} \right).
\]
where $\mathscr{R} = \{1, m\} \cup \{\,i  : \,\, a_i \geq 3 \,\,\}$ is the set of indices of the $E_i$ corresponding to Rees valuations of the maximal ideal.  The log canonical threshold $\min \{j_1, \ldots, j_m \}$ is critically contributed by $E_{j_{k_1} } + \cdots E_{j_{k_2}}$, while the jumping numbers $\{j_i + \Z_{>0} \}$ for $i \in \mathscr{R}$ are contributed by $E_i$.
\end{example}

 \section{Applications to Smooth Surfaces}
 \label{final}

Suppose $R$ is the local ring at a point on a smooth surface, and $\pi: Y \to X = \spec(R)$ is the minimal resolution of the divisor $C$ on $X$.  In \cite{How}, it was shown that an exceptional divisor $E$ which intersects three other prime divisors in the support of $\pi^*C$ contributes a jumping number less than one to the pair $(X,C)$. 
However, as the next example shows,
a chain of exceptional divisors $G$ in the minimal resolution of a plane curve $C$, where  the ends $E$ of $G$ intersect at least three other prime divisors in the support of $\pi^*C$, may or may not critically contribute to the jumping numbers of the embedded curve.  It remains unclear if additional geometric information would  guarantee that $G$ contributes a jumping number to $(X,C)$. A similar situation is found in \cite{Veys}, where Proeyen and Veys are concerned with the poles of the topological zeta function.  To determine whether or not a candidate pole is a pole, they also rely on both geometric and numerical data.

\begin{example}
 Suppose $C$ is germ of the plane curve defined by the polynomial $(y^2-x^5)(y^2 - x^3) = 0$.  It takes two blow-ups to separate the two components of $C$, creating divisors $E_1$ and $E_2$.  At this point these components are both smooth.  To ensure normal crossings, one must blow-up an additional point on the transform of the first component, and two additional points on the second, creating divisors $E_3$, $E_4$, and $E_5$, respectively.   One checks
 \[
 \pi^*C = C+ 4E_1 + 7E_2 +12E_3 +8E_4 + 16E_5 \qquad
 \K = E_1 +2E_2+4E_3+3E_4+6E_5
 \]

\smallskip 

 \begin{center}
 \includegraphics[width=1.3in]{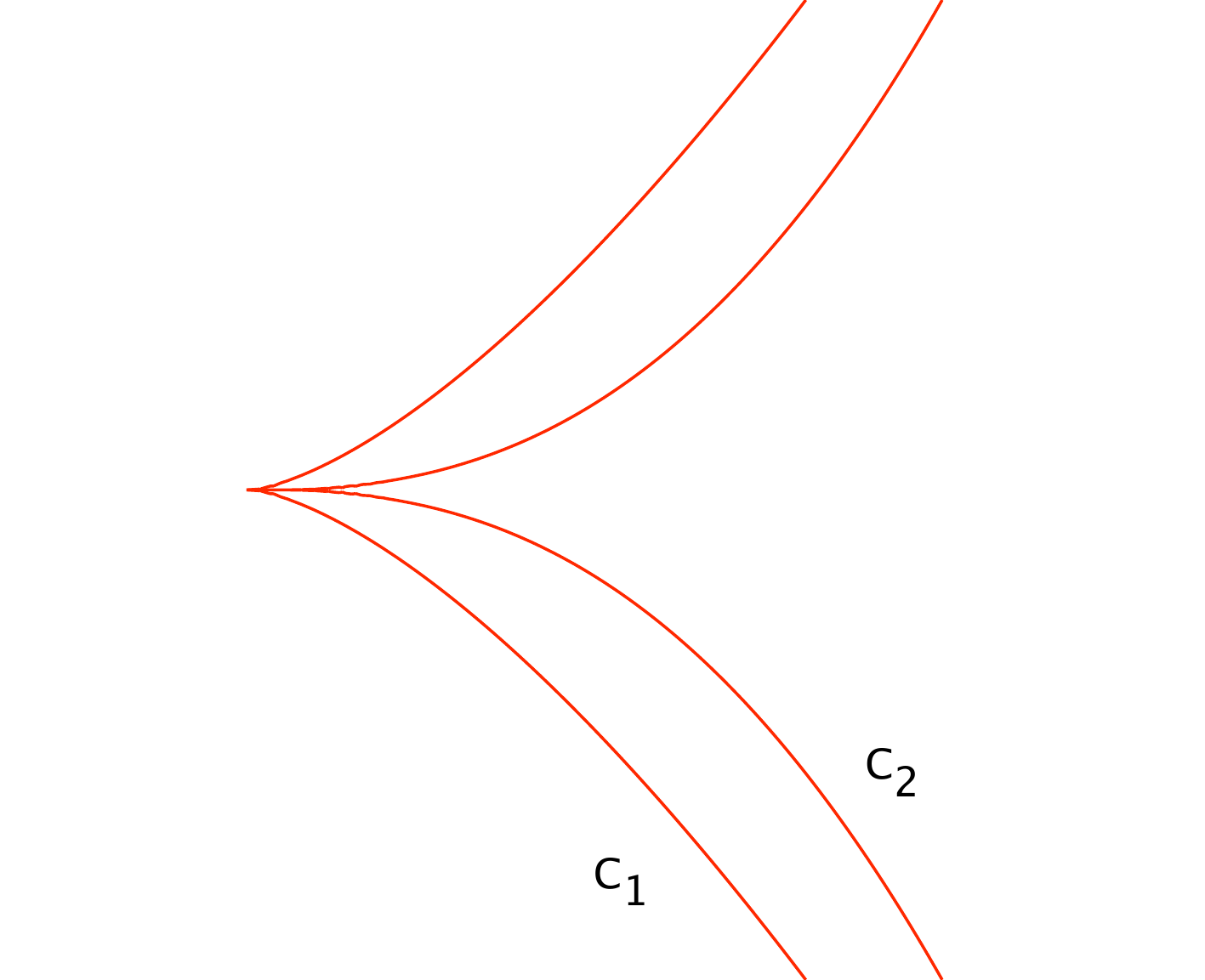}
 \raisebox{.55in}{$\arrow{w}$} \hspace{-.1in}
  \includegraphics[width=1.3in]{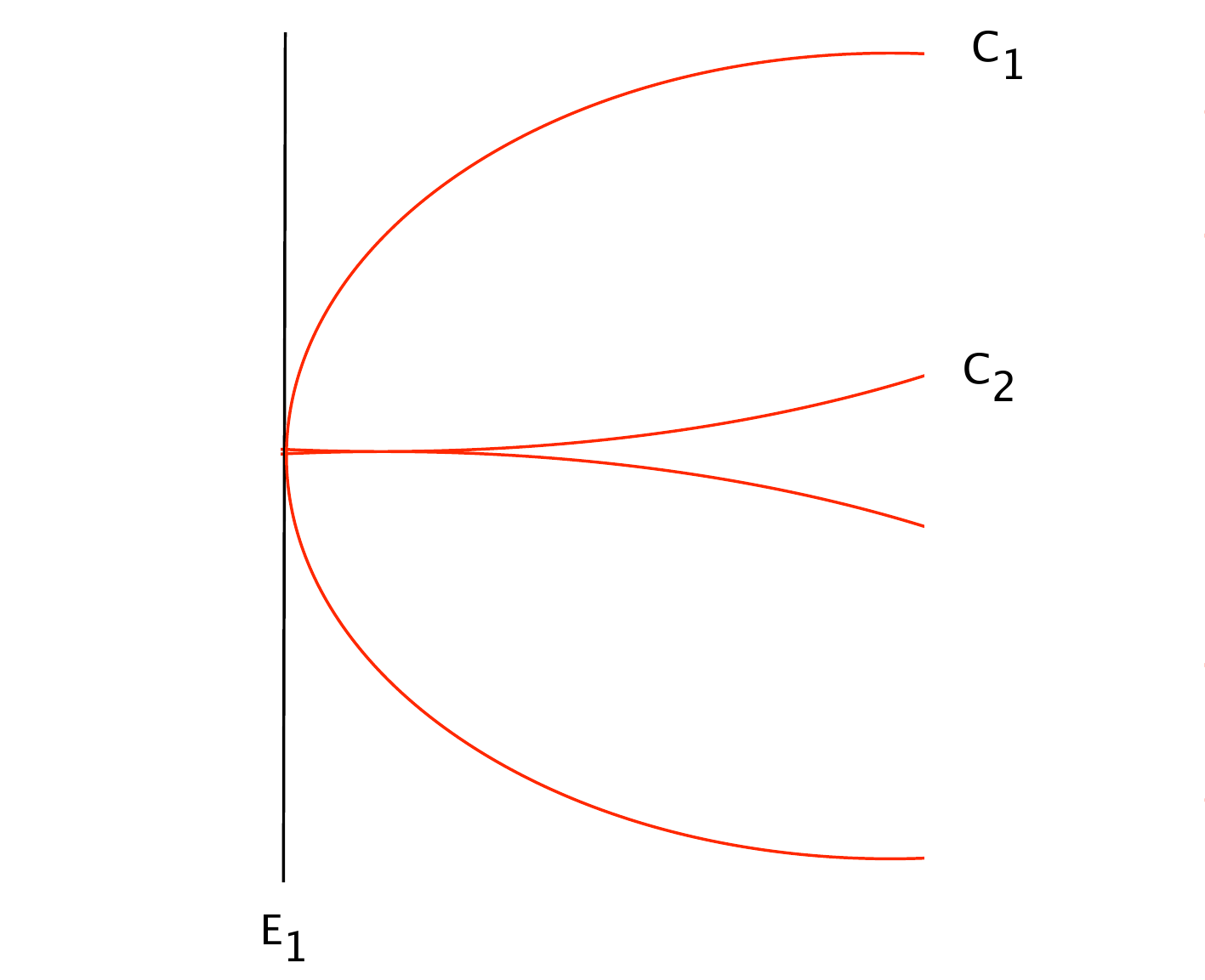}
 \hspace{.1in}
 \raisebox{.55in}{$\arrow{w}$} \hspace{-.1in}
  \includegraphics[width=1.3in]{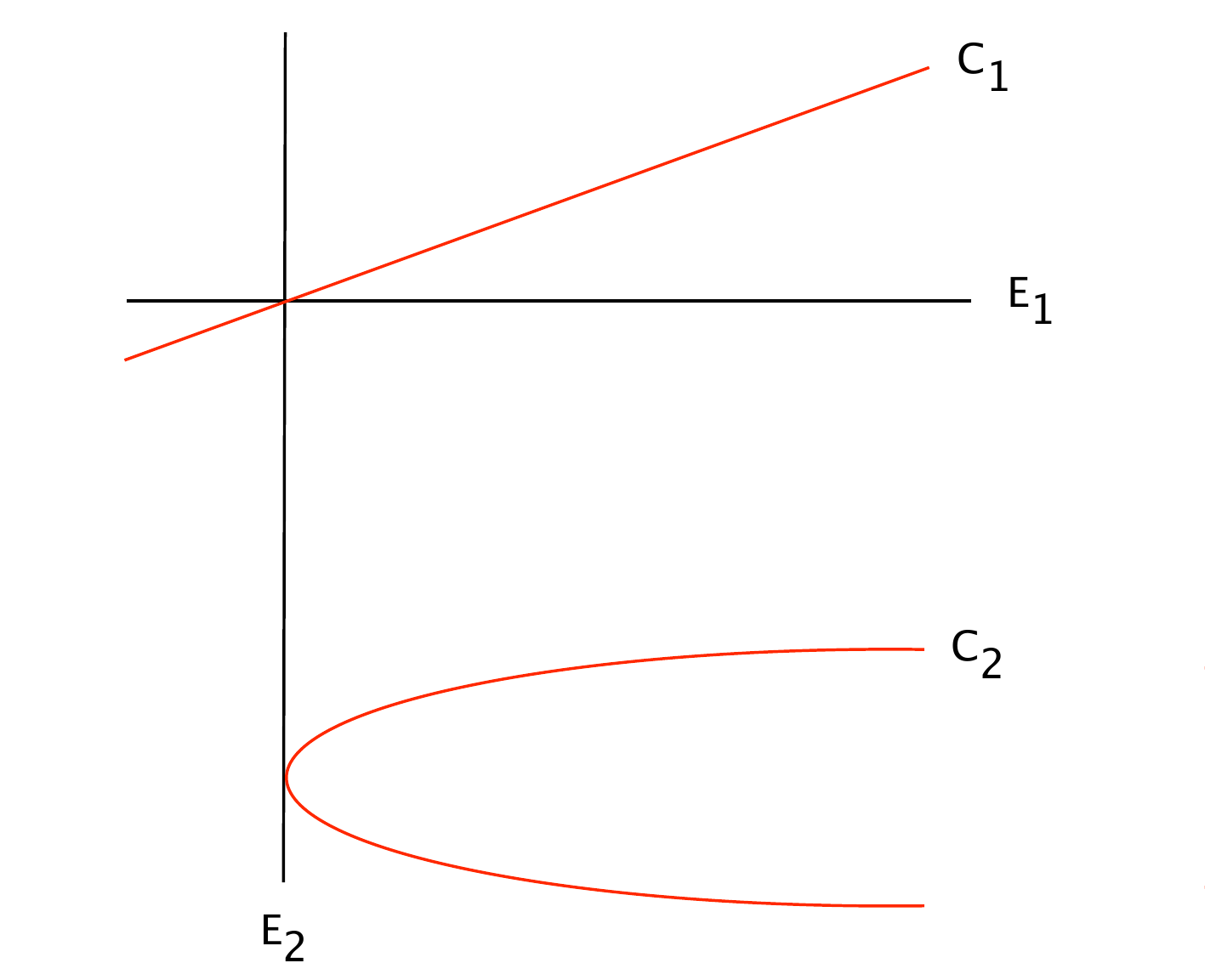}
 \hspace{.1in}
 \raisebox{.55in}{$\arrow{w}$} \hspace{-.1in}
  \includegraphics[width=1.3in]{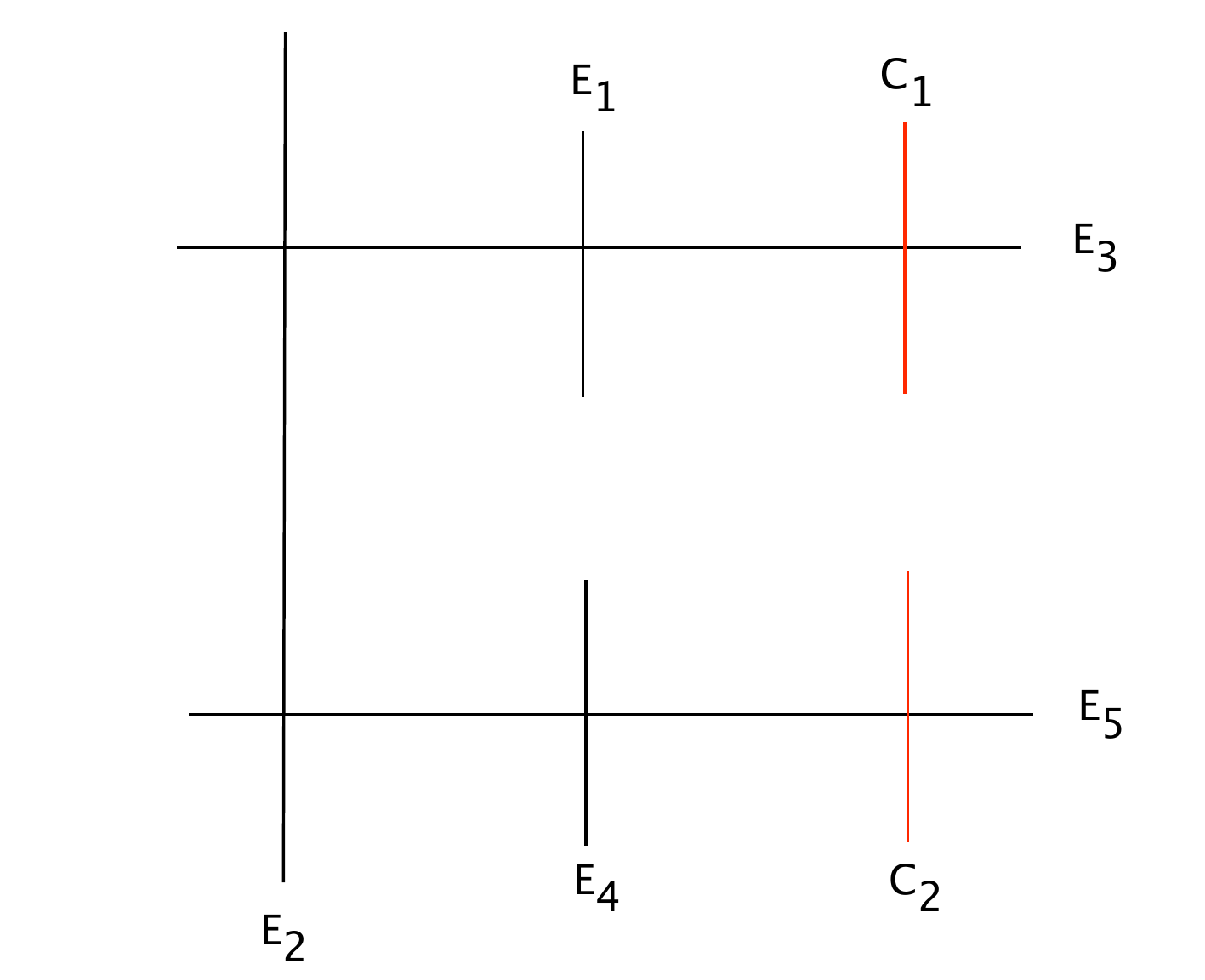}
  \end{center}

\smallskip

\noindent By the Theorem \ref{cor}, the only possible chain of length greater than one that can contribute a jumping number is $E_2 + E_3 + E_5$.  However, these three divisors do not share a common candidate jumping number less than one; hence, they cannot critically contribute any jumping number less than one.  Notice the similarity between the exceptional divisors here and those in Example~\ref{secondexample}.  Despite the fact that the corresponding chains ($E_2 + E_3 + E_5$ here, and $E_0 + E_2 + E_2'$ in Example~\ref{secondexample}) intersect their complements the same number of times, one chain contributes a jumping number while the other does not.  
\end{example}

\begin{remark}
One of the initial motivations for studying jumping number contribution was to further understand the work of J\"arvilehto \cite{Tarmo}. It gives an explicit formula for the jumping numbers of the germ of an analytically irreducible plane curve.  In addition, the jumping numbers are partitioned into subsets corresponding to prime divisors on the minimal resolution which intersect three others in the pull-back of the curve.  In future work, we will show the jumping numbers less than one are always contributed by prime exceptional divisors, and  the partitioning in \cite{Tarmo} corresponds to grouping together the jumping numbers contributed by a particular prime divisor.

\smallskip

J\"arvilehto simultaneously computes the jumping numbers of any simple complete finite colength ideal in a two dimensional regular local ring, and in doing so shows that 1 is not a jumping number of such an ideal.  Our next result  uses Theorem \ref{cor} to give an alternative proof of this fact, and concludes further that this gives a criterion for simplicity.
\end{remark}

For the remainder of this section, we fix the following notation.  Let $R$ be the local ring at a point on a smooth complex surface, and $\pi:Y \to X = \spec(R)$ the minimal resolution of a complete finite colength ideal $\a \subseteq R$ such that $\a\O_Y = \O_Y(-F)$.  Note that the numerical criterion for critical contribution can be simplified using the adjunction formula.  A single exceptional prime divisor $E$ contributes a candidate jumping number $\lambda$ if and only if $-\lfloor \lambda F \rfloor \cdot E \geq 2$;  a reducible chain of exceptional divisors $G$ with common candidate jumping number $\lambda$ critically contributes $\lambda$ if and only if the ends $E$ of $G$ satisfy $-\lfloor \lambda F \rfloor \cdot E = 1$, and the non-ends $E'$ of $G$ satisfy $-\lfloor \lambda F \rfloor \cdot E' = 0$.
 
 Before we begin, it is first necessary to review some of the Zariski-Lipman theory of complete ideals in two dimensional regular local rings.  A good summary of this theory can be found in the introduction to \cite{Lipman}, as well as \cite{Tarmo}.
 Let $E_1, \ldots, E_n$ be the prime exceptional divisors, and consider $\Lambda = \Z E_1 + \cdots \Z E_n$ the lattice they generate. There is another free $\Z$-basis for $\Lambda$, denoted $\hat{E_1}, \ldots, \hat{E_n}$, defined by the property that $\hat{E_i}\cdot E_i =-1$ and $\hat{E_i}\cdot E_j = 0$ for $i \neq j$.  Note that these divisors generate the semigroup of antinef divisors in $\Lambda$.  Indeed, $D = \hat{d_1}\hat{E_1} + \cdots + \hat{d_n}\hat{E_n}$ is antinef if and only if $\hat{d_i} = - D \cdot E_i \geq 0$ for all $i = 1, \ldots, n$.  It is not hard to see that the corresponding complete finite colength ideals $P_i = \pi_*\O_Y(-\hat{E_i})$ are simple, i.e. cannot be written nontrivially as a product of ideals.

 Suppose $I = \pi_*\O_Y(-D)$ is the complete finite colength ideal corresponding to the antinef divisor $D = \hat{d_1}\hat{E_1} + \cdots + \hat{d_n}\hat{E_n} \in \Lambda$.
Then we see immediately
$I = P_1^{\hat{d_1}}\cdots P_n^{\hat{d_n}}$, and this factorization is unique as $\hat{E_1}, \ldots, \hat{E_n}$ are a basis for $\Lambda$.  Further, the valuations on $\mathcal{K}$ corresponding to those $E_i$ such that $\hat{d_i}$ are nonzero are precisely the Rees valuations of $I$.   As any complete ideal 
can be written uniquely as the product of a principal ideal and a finite colength complete ideal,\begin{footnote}{Observe that $I = (I^{-1})^{-1} \cdot (II^{-1})$ shows how this is achieved.}\end{footnote}  unique factorization extends to all complete ideals of $R$.

\begin{proposition}
\label{app}
A complete finite colength ideal $\a$ in the local ring of a smooth complex surface is  simple if and only if $1$ is not a jumping number of $(X,\a)$.
\end{proposition}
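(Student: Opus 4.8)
The plan is to translate simplicity into a statement about the divisor $F$ in the basis $\hat E_1,\ldots,\hat E_n$, and then to detect the jumping number $1$ using the numerical criterion for critical contribution recalled just before the statement. Write $F = \hat d_1 \hat E_1 + \cdots + \hat d_n \hat E_n$ with $\hat d_i = -F\cdot E_i \geq 0$. By the Zariski--Lipman factorization reviewed above, $\a = P_1^{\hat d_1}\cdots P_n^{\hat d_n}$, so $\a$ is simple if and only if exactly one $\hat d_i$ equals $1$ and all others vanish; equivalently, $F = \hat E_{i_0}$ for a single index $i_0$, which is then the unique Rees valuation of $\a$.

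First I would handle the easy direction. Suppose $\a = P_{i_0}$ is simple, so $F = \hat E_{i_0}$. I want to show $1$ is not a jumping number. Since $\pi$ is the minimal resolution of $\a$, the exceptional locus is a chain of divisors (this is the shape of $\hat E_{i_0}$'s support; more carefully, one uses that the only Rees valuation is $E_{i_0}$ and invokes Theorem~\ref{cor} to see any critically contributing collection must have an end equal to $E_{i_0}$). Now $\lambda = 1$ is a candidate jumping number for $E_i$ exactly when $\ord_{E_i} F = a_i$ divides $\ord_{E_i}\K + 1$; using $\K = K_Y - \pi^*K_X$ on a smooth surface and the adjunction-simplified criterion $-\lfloor F\rfloor\cdot E = -F\cdot E = \hat d_i$, I would check that for a single exceptional prime $E$, contribution of $\lambda = 1$ would require $\hat d_i = -F\cdot E \geq 2$, which fails since $F = \hat E_{i_0}$ gives $-F\cdot E_i \in \{0,1\}$. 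For a reducible chain $G$ critically contributing $1$, the ends must satisfy $-\lfloor F\rfloor\cdot E = -F\cdot E = 1$ and non-ends $-F\cdot E' = 0$; but $-F\cdot E_i = \hat d_i$ is $1$ for $i = i_0$ and $0$ otherwise, so $G$ could have at most the single end $E_{i_0}$ with the right intersection number, and a chain of length $\geq 2$ needs two ends — contradiction. Also $\lambda = 1$ is excluded as a candidate for the non-ends unless they meet the support appropriately, but the Rees-valuation and "three-fold point" conditions of Theorem~\ref{cor} cannot be met. Hence no divisor critically contributes $1$, so by Step 3(iv) of the algorithm (the minimal contributing collection would critically contribute) $1$ is not a jumping number.

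For the converse I would argue contrapositively: if $\a$ is not simple, then either two distinct $\hat d_i$ are positive, or some $\hat d_{i_0}\geq 2$. In the first case, let $E_{i_0}, E_{i_1}$ be two distinct Rees valuations; the unique path $G$ in the exceptional chain joining them has ends $E_{i_0}, E_{i_1}$, each a Rees valuation, so the geometric hypotheses of Theorem~\ref{cor} are met, and I would verify directly that $1$ is a common candidate jumping number for every divisor on this path and that the numerical equalities $-\lfloor F\rfloor\cdot E = -F\cdot E$ match $-G\cdot E$ along $G$ (the ends contribute $1$ each from the Rees condition, interior vertices contribute $0$), so $G$ critically contributes $1$. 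In the second case, $F = 2\hat E_{i_0} + \cdots$, and one computes $-\lfloor F\rfloor \cdot E_{i_0} = -F\cdot E_{i_0} = \hat d_{i_0}\geq 2$, so the single prime $E_{i_0}$ contributes $\lambda = 1$ by the prime criterion; I must still check $1$ is genuinely a candidate jumping number for $E_{i_0}$, i.e. $a_{i_0} \mid b_{i_0}+1$, which follows from the adjunction identity $\K\cdot E_{i_0} = -2 - E_{i_0}^2$ together with $F\cdot E_{i_0} = -\hat d_{i_0}$. Either way $1$ is a jumping number, completing the contrapositive.

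The main obstacle I anticipate is the bookkeeping in the converse when two Rees valuations occur but the joining path $G$ passes through divisors that might \emph{not} have $1$ as a candidate jumping number — I need the relation between the $\hat E_i$-coordinates of $F$ and the proximity/continued-fraction combinatorics of the exceptional chain to guarantee that $\ord_{E}F$ and $\ord_E\K$ are correlated so that $1$ is simultaneously a candidate for all prime divisors along the path. Verifying this compatibility (essentially that $\ord_E\K - \ord_E F$ behaves linearly along the chain in a way forcing the fractional part of $F - \K$ to vanish at $\lambda = 1$ exactly on $G$) is where the real work lies; everything else is a direct application of Theorems~\ref{lemma} and~\ref{cor}.
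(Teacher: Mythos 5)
Your argument follows the paper's proof almost exactly: write $F$ in the basis $\hat{E_1},\ldots,\hat{E_n}$, use unique factorization to equate simplicity with $F=\hat{E_{i_0}}$, and test $\lambda=1$ with the adjunction-simplified numerical criterion ($-F\cdot E\geq 2$ for a prime, $-F\cdot E=1$ at ends and $0$ at non-ends for a chain). However, the ``main obstacle'' you flag at the end --- that divisors along the connecting path might not have $1$ as a candidate jumping number --- is not an obstacle, and the test you propose for it is wrong. On a smooth surface $\K$ and $F$ are integral divisors, so $\ord_E(1\cdot F-\K)=a_E-b_E$ is an integer for every prime divisor $E$; by the definition in Section 2, $1$ is therefore automatically a candidate jumping number for every reduced divisor on $Y$. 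The condition $a_{i_0}\mid b_{i_0}+1$ you write down is instead the condition for $1$ to be a \emph{nontrivial} candidate (one of the form $\frac{b_{i_0}+m}{a_{i_0}}$ with $m>0$), which is not what Theorems \ref{lemma} and \ref{cor} require, and it does not follow from the adjunction identities you cite. Since you explicitly leave this point unverified and call it ``where the real work lies,'' the proof is incomplete as written, even though the missing verification is immediate.

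A second, genuine gap: in the converse, when every $\hat{d_i}\in\{0,1\}$ and at least two are nonzero, you take an \emph{arbitrary} pair of Rees valuations and assert that the interior vertices of the connecting path satisfy $-F\cdot E'=0$. This fails whenever a third Rees valuation lies on that path, since then $-F\cdot E'=1$ at an interior vertex and the chain does not critically contribute $1$. As in the paper, you must choose the pair $E_{i_1},E_{i_2}$ so that every $E_j$ strictly between them has $\hat{d_j}=0$; such a pair exists (take two nonzero coordinates at minimal distance in the dual graph). Relatedly, your claim that the exceptional locus of the minimal resolution is a chain is false in general --- it is only a tree --- though your argument does not actually depend on it.
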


\begin{proof}
If $\a$ is simple, then $\a = P_i$ for some $i$, and we have that $\a \O_Y = \O_Y(-\hat{E_i})$.  Suppose, by way of contradiction, $1$ is a jumping number of $(X,\a)$.  We may assume there is a reduced chain   of exceptional divisors $G$ which critically contributes $1$ to the pair $(X,\a)$.  $G$ cannot be a single prime divisor $E$ since $-\hat{E_i}\cdot E$ is either 0 or 1. Thus, $G$ is reducible and must have two distinct ends satisfying $-\hat{E_i}\cdot E = 1$.  Since this only happens for $E = E_i$,  $1$ is not a jumping number of $(X,\a)$.

Alternatively, assume that $\a = P_1^{\hat{d_1}}\cdots P_n^{\hat{d_n}}$ is the finite colength complete ideal corresponding to the antinef divisor $D = \hat{d_1}\hat{E_1} + \cdots + \hat{d_n}\hat{E_n}$, and $\a$ is not simple.  Suppose first there is some $i$ such that $\hat{d_i}\geq 2$.  In this case, $-D \cdot E_i = \hat{d_i} \geq 2$ shows that $1$ is a jumping number contributed by $E_i$.  Otherwise, we may assume $\hat{d_i}$ is $0$ or $1$ for each $i$, and at least two such are nonzero.  In this case, we can find two of them $\hat{d_{i_1}} = \hat{d_{i_2}}=1$ such that for any $E_j$ in the unique chain of exceptional divisors $G$ connecting $E_{i_1}$ and $E_{i_2}$ we have $\hat{d_j} = 0$.  Theorem \ref{cor} now gives that $1$ is a jumping number of $(X,\a)$ critically contributed by~$G$.
\end{proof}

\begin{remark}
The technique used in Corollary \ref{app} also shows that every chain of exceptional divisors critically contributes a jumping number for some ideal $\a \subset R$ having $\pi$ as a resolution.  Indeed, if $G$ is the chain connecting $E_{i_1}$ and $E_{i_2}$, then $G$ critically contributes $1$ to the ideal $P_{i_1} P_{i_2}$.  
One can also use this method to produce examples where many intersecting and nonintersecting chains  critically contribute the same jumping number to a given pair.
\end{remark}

\providecommand{\bysame}{\leavevmode\hbox to3em{\hrulefill}\thinspace}
\providecommand{\MR}{\relax\ifhmode\unskip\space\fi MR }
\providecommand{\MRhref}[2]{%
  \href{http://www.ams.org/mathscinet-getitem?mr=#1}{#2}
}
\providecommand{\href}[2]{#2}

\end{document}